\begin{document}
\frenchspacing
\righthyphenmin=2

\renewcommand{\refname}{References}
\renewcommand{\bibname}{Bibliography}
\renewcommand{\proofname}{Proof}

\newtheorem{lm}{Lemma}
\newtheorem{lma}{Lemma}[lm]
\renewcommand{\thelma}{\arabic{lm}\Alph{lma}}
\newtheorem{tm}{Theorem}
\newtheorem*{ml}{Main lemma}
\newtheorem{cl}{Corollary}
\newtheorem*{cl*}{Corollary}
\theoremstyle{definition}
\newtheorem*{df}{Definition}
\newtheorem*{caa}{Constantly Acting Assumption}
\newtheorem*{caas}{Constantly Acting Assumptions}
\theoremstyle{remark}
\newtheorem*{rm*}{Remark}
\newtheorem{rmk}{Remark}
\newtheorem*{hint}{Hint}

\renewcommand{\Im}{\mathop{\mathrm{Im}}\nolimits}
\newcommand{\Dom}{\mathop{\mathrm{Dom}}\nolimits}
\newcommand{\Card}{\mathop{\mathrm{Card}}\nolimits}
\newcommand{\StU}{\mathop{\mathrm{StU}}\nolimits}
\newcommand{\Cent}{\mathop{\mathrm{Cent}}\nolimits}
\newcommand{\Ker}{\mathop{\mathrm{Ker}}\nolimits}
\newcommand{\inv}{^{-1}}
\newcommand{\M}{{\mathrm{M}}\,\!}
\newcommand{\E}{{\mathrm{E}}\,}
\newcommand{\GL}{\mathop{\mathrm{GL}}\nolimits}
\newcommand{\St}{\mathop{\mathrm{St}}\nolimits}
\newcommand{\Hf}{\mathop{\mathrm{H_1}}\nolimits}
\newcommand{\Hs}{\mathop{\mathrm{H_2}}\nolimits}
\newcommand{\GQ}{\mathop{\mathrm{GQ}}\nolimits}
\newcommand{\GH}{\mathop{\mathrm{GH}}\nolimits}
\renewcommand{\U}{\mathrm U\,\!}
\newcommand{\EU}{\mathop{\mathrm{EU}}\nolimits}
\newcommand{\KU}{\mathop{\mathrm{K_2U}}\nolimits}
\newcommand{\ind}{\mathop{\mathrm{ind}}\nolimits}

\newcommand{\sta}[1]{\StU(#1,\,R,\,\mathfrak{L})}
\newcommand{\st}{\sta{2n}}

\author{Andrei Lavrenov}
\title{On odd unitary Steinberg group}
\date{}
\maketitle

\begin{center} {\bf Abstract} \end{center}
Let $R$ be a ring with pseudo-involution, $\mathfrak L$ be an odd form parameter, $\U(2n,\,R,\,\mathfrak L)$ be an odd hyperbolic unitary group, $\EU(2n,\,R,\,\mathfrak L)$ be it elementary subgroup and $\StU(2n,\,R,\,\mathfrak L)$ be an odd unitary Steinberg group (see~\cite{VPoug,VP2}). We compute the Schur multipliers of $\StU(2n,\,R,\,\mathfrak L)$ and $\EU(R,\,\mathfrak L)$.\newline\newline
{\it MSC: $19\mathrm C09$}\newline
{\it Keywords: odd unitary group, Schur multiplier, non-stable $\mathrm K$-theory.}\newline
{\it 
Acknowledgement: the author gratefully acknowledges the support of the\newline
$1)$ State Financed research task $6.38.74.2011$ ``Structure theory and geometry of algebraic groups and their applications in representation theory and algebraic $\mathrm K$-theory'' at the SPbSU,\newline
$2)$ RFBR project $13$-$01$-$00709$ ``Study of algebraic groups over rings by localization methods'',\newline
$3)$ Chebyshev Laboratory (Department of Mathematics and Mechanics, SPbSU) under RF Government grant $11.\mathrm G34.31.0026$.
} 

\section*{Introduction}

Let $\st$ denote the analog of the Steinberg group for an odd unitary group defined by Petrov in~\cite{VPoug}. The main goal of the present paper is to prove that $\st$ is centrally closed.

Let $R$ be an associative ring with 1, $n\geq3$. Denote by $\E(n,\,R)$ the usual elementary group, i.e. the subgroup of the group of invertible $n\times n$ matrices, generated by elementary transvections $t_{ij}(a)$. It is obviously that Steinberg relations
\setcounter{equation}{0}
\renewcommand{\theequation}{S\arabic{equation}}
\begin{align}
&t_{ij}(a)t_{ij}(b)=t_{ij}(a+b),\\
&[t_{ij}(a),\,t_{kh}(b)]=1\ \text{ for }k\neq j,\ h\neq i,\\
&[t_{ij}(a),\,t_{jk}(b)]=t_{ik}(ab)
\end{align}
hold for transvections. The abstract group defined by generators $\{x_{ij}(a)\mid 1\leq i\neq j\leq n,\ a\in R\}$ and relations S1--S3 with $x_{ij}(a)$ instead of $t_{ij}(a)$ is called the (linear) Steinberg group $\St(n,\,R)$. It follows from S3 that the commutator subgroup $[\St(n,\,R),\,\St(n,\,R)]$ of the Steinberg group coincides with it and thus $\Hf(\St(n,\,R),\,\mathbb{Z})$ is trivial. For such groups (called perfect) the kernel of the so called universal central extension coincides with the second homology group $\Hs(\St(n,\,R),\,\mathbb{Z})$ and in this context is often called the Schur multiplier (see~\cite{JMb}, \cite{RSb}). It is known that for $n\geq5$ the Schur multiplier of $\St(n,\,R)$ is trivial, i.e. this group is centrally closed (or superperfect), see~\cite{JMb}, \cite{RSb}. 

Parallel results are known in the cases of other classical\footnote{One can also find parallel results for exceptional Chevalley groups in~\cite{MSgrc,vdKSosm, S2}, but they are not generalized in the present paper.} Chevalley groups (see~\cite{MSgrc, vdKSosm, vdK2, S2}), Bak's quadratic groups $\GQ(2n,\,R,\,\Lambda)$ (see~\cite{ABb, AB2, AB3, ABNV, Dibd, HOb, Haz, HV, HVZh, StepV}) and Hermitian groups $\GH(2n,\,R,\,a_1,\ldots,a_r)$ (see~\cite{GThgk,BTshk}). In the present paper we show that the Schur multiplier of Petrov's odd unitary Steinberg group is trivial when $n\geq5$, which is a common generalization of all above results. The condition $n\geq5$ can not be relaxed, since the orthogonal Steinberg group which is the special case of an odd unitary Steinberg group is not necessary centrally closed for $n=4$ (see~\cite{vdKSosm}). Using these results one can obtain that ``in the limit'' the Schur multiplier of elementary odd unitary group $\EU(R,\,\mathfrak L)$ coincides with the kernel $\KU(R,\,\mathfrak L)$ of the natural epimorphism $\StU(R,\,\mathfrak L)\twoheadrightarrow\EU(R,\,\mathfrak L)$:
$$
\KU(R,\,\mathfrak L)=\Hs(\EU(R,\,\mathfrak L),\,\mathbb{Z}).
$$

The paper is organized as follows. In Section~1 we recall the definition of the Schur multiplier and prove several elementary facts about it, in Section~2 we define an odd unitary Steinberg group following~\cite{VPoug}, in Section~3 we prove the main technical lemma and in the last section we obtain the main results.

I would like to express my thanks to Professor Nikolai Vavilov for his supervision of this work and Dr. Sergey Sinchuk for helpful discussions.

\section{Central extensions}
\begin{df}
An epimorphism of abstract groups $\epsilon:H\twoheadrightarrow G$ is called {\it a central extension (of $G$)} if it its kernel is contained in the center of $H$.
\end{df}

\begin{lm}[Steinberg central trick]
\label{sct}
Let $\epsilon:H\twoheadrightarrow G$ be a central extension. Then for any elements $u_1$, $u_2$, $v_1$, $v_2\in H$ such that $\epsilon(u_1)=\epsilon(u_2)$ and $\epsilon(v_1)=\epsilon(v_2)$ one has $[u_1,\,v_1]=[u_2,\,v_2]$.
\end{lm}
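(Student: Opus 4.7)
The plan is to exploit the defining property of a central extension: any two preimages of the same element of $G$ differ by a central element of $H$. First I would write $u_2 = u_1 z$ and $v_2 = v_1 w$ where $z = u_1^{-1} u_2$ and $w = v_1^{-1} v_2$; since $\epsilon(z) = \epsilon(w) = 1$, both $z$ and $w$ lie in $\ker \epsilon \subseteq Z(H)$.

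Then the computation of $[u_2, v_2] = [u_1 z, v_1 w]$ is essentially a one-liner: expanding the commutator as $u_1 z \cdot v_1 w \cdot z^{-1} u_1^{-1} \cdot w^{-1} v_1^{-1}$ and using that $z$, $w$ commute with every element of $H$, the central factors cancel in pairs and what remains is $u_1 v_1 u_1^{-1} v_1^{-1} = [u_1, v_1]$.

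There is no real obstacle here; the lemma is a direct consequence of the definition, and the only ``content'' is recognizing that centrality of $\ker \epsilon$ is exactly what is needed to move the correction factors past $u_1$ and $v_1$. The statement is recorded as a named lemma only because it will be invoked repeatedly in later sections as a convenient device for verifying relations in $\StU$ modulo a central kernel.
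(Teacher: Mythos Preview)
Your proof is correct and is exactly the intended argument: the paper itself only gives the hint ``Use that $u_1{u_2}^{-1},\,v_1{v_2}^{-1}\in\Ker(\epsilon)\subseteq\Cent(H)$,'' and your write-up is the straightforward unpacking of that hint. The only cosmetic difference is that the paper phrases the central correction as $u_1u_2^{-1}$ rather than $u_1^{-1}u_2$, which is immaterial.
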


\begin{hint}
Use that $u_1{u_2}\inv$, $v_1{v_2}\inv\in\Ker(\epsilon)\subseteq\Cent(H)$.
\end{hint}

\newcommand{\epi}{\twoheadrightarrow}

\begin{df}
Let $\epsilon:H\epi G$ be a central extension. Then for $x$, $y\in G$ denote by $[\epsilon\inv x,\,\epsilon\inv y]$ the commutator of any $u$, $v\in H$ such that $\epsilon(u)=x$ and $\epsilon(v)=y$. This definition is correct by Lemma~\ref{sct}.
\end{df}

\begin{df}
Group $G$ is called {\it perfect} if it coincides with its commutator subgroup $[G,\,G]$.
\end{df}

\begin{lm}
\label{unique}
Let $\epsilon:H\epi G$ be a central extension. Then $H$ is perfect if and only if for any central extension $\zeta:H'\epi G$ there exists no more than one homomorphism making the diagram
$$
\xymatrix{
H\ar@{-->}[rr]\ar@{->>}[rd]_\epsilon&&H'\ar@{->>}[ld]^\zeta\\
&G
}
$$
commutative.
\end{lm}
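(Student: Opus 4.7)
The plan is to prove both implications separately. For the forward direction I would deduce uniqueness from Lemma~\ref{sct}, and for the converse I would construct, whenever $H$ is not perfect, a single central extension of $G$ that admits two distinct lifts from $H$.

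First I would handle $(\Rightarrow)$. Assume $H=[H,H]$ and let $\phi_1,\phi_2\colon H\to H'$ be two homomorphisms with $\zeta\circ\phi_1=\zeta\circ\phi_2=\epsilon$. For any $a,b\in H$ one has $\zeta(\phi_1(a))=\epsilon(a)=\zeta(\phi_2(a))$ and similarly for $b$, so Steinberg's central trick applied to the central extension $\zeta$ gives
$$
[\phi_1(a),\,\phi_1(b)]=[\phi_2(a),\,\phi_2(b)].
$$
Since $\phi_i$ is a homomorphism, this means $\phi_1([a,b])=\phi_2([a,b])$, so the two maps agree on every commutator. Perfectness of $H$ forces $\phi_1=\phi_2$.

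For $(\Leftarrow)$ I would argue the contrapositive: if $H\neq[H,H]$, exhibit non-uniqueness. Put $A=H/[H,H]$, which is a nontrivial abelian group, and set $H'=H\times A$ with $\zeta\colon H'\epi G$ defined by $\zeta(h,a)=\epsilon(h)$. Then $\Ker(\zeta)=\Ker(\epsilon)\times A$ is central in $H'$ (the first factor is central in $H$ since $\epsilon$ is a central extension, and the second factor is abelian and commutes with $H$ by construction of the direct product), so $\zeta$ is a central extension. Writing $\pi\colon H\to A$ for the canonical projection, the two maps
$$
\phi_1\colon h\mapsto(h,\,1),\qquad \phi_2\colon h\mapsto\bigl(h,\,\pi(h)\bigr)
$$
are both group homomorphisms lifting $\epsilon$ through $\zeta$, yet they disagree on any $h$ with $\pi(h)\neq1$, which exists by the assumption $A\neq1$.

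There is no real obstacle here; the only thing to watch is that $\zeta$ really is central and that $\phi_2$ really is a homomorphism, both of which reduce to $A$ being abelian and $\pi$ being a group homomorphism. The conceptual content of the lemma is entirely concentrated in the forward direction, where the Steinberg central trick does the work.
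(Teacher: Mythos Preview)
Your proof is correct and follows essentially the same route as the paper's: the forward direction invokes the Steinberg central trick (Lemma~\ref{sct}) to show any two lifts agree on commutators and hence on all of $H$ by perfectness, and the converse is proved by the same contrapositive construction using $H'=H\times H/[H,H]$ with the two maps $h\mapsto(h,1)$ and $h\mapsto(h,\pi(h))$. The only difference is cosmetic---you spell out slightly more explicitly why $\zeta$ is central and why $\phi_2$ is a homomorphism.
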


\begin{proof}
Let $H$ be a perfect group, $\zeta$ be a central extension of $G$ and $\eta$, $\theta$ be homomorphisms making the diagram
$$
\xymatrix{
H\ar@<0.5ex>[rr]^\eta\ar@<-0.5ex>[rr]_\theta\ar@{->>}[rd]_\epsilon&&H'\ar@{->>}[ld]^\zeta\\
&G
}
$$
commutative. Then for any two elements $x$, $y$ from $H$ by lemma~\ref{sct} we have that $[\eta(x),\,\eta(y)]=[\zeta\inv(\epsilon(x)),\,\zeta\inv(\epsilon(y))]=[\theta(x),\,\theta(y)]$, i.e. $\eta([x,\,y])=\theta([x,\,y])$. But $H$ is perfect, so $\eta=\theta$.

Now suppose that $H$ is not perfect. Then there is a nontrivial ho\-mo\-mor\-phism $\alpha:H\rightarrow A=\dfrac{H}{[H,\,H]}$ onto abelian group. Using the fact that the morphism $\zeta:H\times A\rightarrow G$ defined by $\zeta(u,a)=\epsilon(u)$ is a central extension, we obtain distinct homomorphisms $\eta(u)=(u,1)$ and $\theta(u)=(u,\alpha(u))$, making the diagram
$$
\xymatrix{
H\ar@<0.5ex>[rr]^\eta\ar@<-0.5ex>[rr]_\theta\ar@{->>}[rd]_\epsilon&&H\times A\ar@{->>}[ld]^\zeta\\
&G
}
$$
commutative.
\end{proof}

\begin{df}
A central extension $\pi:U\epi G$ is called {\it a universal central extension of $G$} if for any central extension $\epsilon:H\epi G$ there is a unique group homomorphism $\eta$ making the diagram 
$$
\xymatrix{
U\ar@{-->}[rr]^\eta\ar@{->>}[rd]_\pi&&H\ar@{->>}[ld]^\epsilon\\
&G
}
$$
commutative. 
\end{df}

\begin{rm*}
Lemma~\ref{unique} implies that the domain (and thus the image) of a universal central extension is perfect. So the universal central extension can only exist for a perfect group.
\end{rm*}

\begin{lm}
\label{uceexists}
Any perfect group admits a universal central extension.
\end{lm}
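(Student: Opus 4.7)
The plan is to construct the universal central extension explicitly from a free presentation and then verify its two defining properties. Let $G$ be perfect and fix a surjection $\pi_0\colon F\epi G$ from a free group $F$, with kernel $R$. The group $E=F/[F,R]$ is then a central extension of $G$: indeed $R/[F,R]$ lies in its center because its elements commute with the images of all generators of $F$. Inside $E$ consider the subgroup $U=[F,F]\cdot[F,R]/[F,R]$, i.e.\ the image of $[F,F]$. Since $G=[G,G]$, we have $F=[F,F]\cdot R$, so $U$ still projects onto $G$ via the restriction $\pi$ of the composition $E\epi G$. Its kernel $([F,F]\cap R)/[F,R]$ is central in $U$, hence $\pi\colon U\epi G$ is a central extension.

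Next I would show that $U$ is perfect. The group $U$ is generated by elements of the form $[a,b]$ with $a,b\in E$. Because $G$ is perfect, each image $\bar a\in G$ admits a representation as a product of commutators in $G$; lifting these commutators arbitrarily to $E$ produces an element $a'\in[E,E]=U$ with $\epsilon_E(a')=\bar a$, where $\epsilon_E\colon E\epi G$. Hence $a$ and $a'$ differ by a central element, and Lemma~\ref{sct} applied to $\epsilon_E$ gives $[a,b]=[a',b']$ for suitable $a',b'\in U$. Thus every generator of $U$ is already a commutator in $U$, so $U=[U,U]$.

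Universality is the heart of the argument. Given any central extension $\epsilon\colon H\epi G$, choose for each free generator $x$ of $F$ an arbitrary preimage in $H$ of $\pi_0(x)$; this determines a homomorphism $\tilde\eta\colon F\to H$ lifting $\pi_0$. For $r\in R$ the element $\tilde\eta(r)$ lies in $\ker(\epsilon)\subseteq\Cent(H)$, so $\tilde\eta([F,R])=1$ and $\tilde\eta$ descends to a homomorphism $E\to H$. Restricting to $U$ yields the required $\eta\colon U\to H$ with $\epsilon\circ\eta=\pi$. Uniqueness of $\eta$ is immediate from Lemma~\ref{unique}, since $U$ has just been shown to be perfect.

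The only real obstacle is the perfectness of $U$; once that is in hand, both existence of the lift (an easy free-group argument, with centrality of $\ker\epsilon$ killing the relations in $R$) and uniqueness (an application of Lemma~\ref{unique}) are formal. The trick in the perfectness step is to use the Steinberg central trick to replace arbitrary lifts by lifts that already lie in $[E,E]=U$, which is the only place where the hypothesis that $G$ is perfect is actually consumed.
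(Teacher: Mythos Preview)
Your proof is correct and follows essentially the same approach as the paper: both construct $U=[F,F]/[R,F]$ from a free presentation $F\epi G$ with kernel $R$, verify perfectness of $U$ via the Steinberg central trick applied to the central extension $F/[R,F]\epi G$, and obtain existence of the lift from freeness of $F$ together with centrality of $\Ker\epsilon$, with uniqueness coming from Lemma~\ref{unique}. The only cosmetic difference is that you write $U=[F,F]\cdot[F,R]/[F,R]$, which equals $[F,F]/[R,F]$ since $[F,R]\subseteq[F,F]$.
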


\begin{proof}
Fix an epimorphism $\phi:F\epi G$ with $F$ free and $R=\Ker\phi$. Obviously $\phi$ factors through the factorization epimorphism $\tau:F\epi \overline F=\dfrac F{[R,\,F]}$.
$$
\xymatrix{
F\ar@{->>}[r]^{\phi}\ar@{->>}[d]_\tau&G\\
\overline F\ar@{-->}[ru]_{\varphi}
}
$$
Restricting the factored morphism $\varphi$ to the commutator subgroup we obtain the morphism $\pi:\dfrac{[F,\,F]}{[R,\,F]}\epi[G,\,G]=G$ with $\Ker\pi=\dfrac{R\cap[F,\,F]}{[R,\,F]}$. Obviously $\varphi$ and $\pi$ are central extensions of $G$ and below we will show that $\pi$ is a universal central extension.

Fix a central extension $\epsilon:H\epi G$. Applying the universal property of $F$ one can obtain a group morphism $\theta:F\rightarrow H$ making the diagram
$$
\xymatrix{
F\ar@{-->}[rr]^\theta\ar@{->>}[rd]_\phi&&H\ar@{->>}[ld]^\epsilon\\
&G
}
$$
commutative. Thus $[\theta(R),\,\theta(F)]\subseteq[\Ker\epsilon,\,\theta(F)]=1$ so $\theta$ factors through $\tau$.
$$
\xymatrix{
F\ar@{->}[rr]^\theta\ar@{->>}[rd]^\tau\ar@{->>}@/_/[rdd]_\phi&&H\ar@{->>}@/^/[ldd]^\epsilon\\
&\overline F\ar@{-->}[ru]^\vartheta\ar@{->>}[d]|{\varphi\ \ }&\\
&G&
}
$$
Restricting the factored morphism $\vartheta$ to the commutator subgroup we obtain morphism $\eta:[\overline F,\,\overline F]\rightarrow H$ making the diagram 
$$
\xymatrix{
[\overline F,\,\overline F]\ar@{-->}[rr]^\eta\ar@{->>}[rd]_\pi&&H\ar@{->>}[ld]^\epsilon\\
&G
}
$$
commutative. Finally, observe that for any $x$, $y\in\overline F$ there are $u$, $v\in[\overline F,\,\overline F]$ such that $\varphi(x)=\pi(u)$ and $\varphi(y)=\pi(v)$ (since $\pi$ is a surjection) but $\varphi$ is a central extension so that it follows from Lemma~\ref{sct} that $[x,\,y]=[u,\,v]$ and thus $[\overline F,\,\overline F]$ is a perfect group. Now one can use Lemma~\ref{unique} to finish the proof.
\end{proof}

\begin{df}
If $\pi$ is a universal central extension of $G$ then its kernel is called {\it the Schur multiplier of $G$} and denoted by $\M(G)=\Ker(\pi)$. This definition is obviously correct, i.e. if $\pi$ and $\varpi$ are two universal central extensions of $G$, their kernels are isomorphic, $\Ker(\pi)\cong\Ker(\varpi)$.
\end{df}

\begin{rm*}
Lemma~\ref{uceexists} implies that the Schur multiplier is defined for all perfect groups (and only for them).
\end{rm*}

\begin{df}
A perfect group $G$ is called {\it centrally closed} if $1_G:G\rightarrow G$ is the universal central extension. Obviously this is equivalent to $\M(G)=1$.
\end{df}

\begin{df}
A central extension $\epsilon:H\epi G$ is called {\it split} if there is a homomorphism $\sigma:G\rightarrow H$ such that $\epsilon\sigma=1_G$, i.e. the short exact sequence
$$
\begin{CD}
1@>>>\Ker\epsilon@>>>H@>\epsilon>>G@>>>1
\end{CD}
$$
is (right) splitting.
\end{df}

\begin{lm}
\label{perfectsplit}
Let $\epsilon:H\epi G$ be a split central extension with $H$ perfect. Then $H$ and $G$ are isomorphic.
\end{lm}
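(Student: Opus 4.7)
The plan is to use Lemma~\ref{unique} directly: since $H$ is perfect and $\epsilon:H\epi G$ is a central extension, there is at most one homomorphism $H\to H'$ over $G$ for any central extension $\zeta:H'\epi G$. I will apply this with the target extension being $\epsilon:H\epi G$ itself.

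More concretely, I would first observe that the splitting $\sigma:G\rightarrow H$ (with $\epsilon\sigma=1_G$) is automatically injective, so it suffices to show that $\sigma$ is surjective, equivalently that $\sigma\epsilon=1_H$. To establish this, consider the two homomorphisms $1_H$ and $\sigma\epsilon$ from $H$ to $H$. Both fit into the diagram
$$
\xymatrix{
H\ar@<0.5ex>[rr]^{1_H}\ar@<-0.5ex>[rr]_{\sigma\epsilon}\ar@{->>}[rd]_\epsilon&&H\ar@{->>}[ld]^\epsilon\\
&G
}
$$
since $\epsilon\circ 1_H=\epsilon$ and $\epsilon\circ(\sigma\epsilon)=(\epsilon\sigma)\epsilon=1_G\circ\epsilon=\epsilon$. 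Because $H$ is perfect and $\epsilon$ is a central extension, Lemma~\ref{unique} (applied with $H'=H$ and $\zeta=\epsilon$) forces $1_H=\sigma\epsilon$.

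Combining this with $\epsilon\sigma=1_G$, the maps $\epsilon$ and $\sigma$ are mutually inverse, hence isomorphisms. There is no real obstacle here; the only thing to watch out for is to remember that Lemma~\ref{unique} applies to any target central extension of $G$ and in particular to $\epsilon$ itself, which is the whole trick.
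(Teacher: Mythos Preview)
Your proof is correct and follows essentially the same approach as the paper: both apply Lemma~\ref{unique} to the pair of maps $1_H$ and $\sigma\epsilon$ over the central extension $\epsilon$ to conclude $\sigma\epsilon=1_H$, whence $\epsilon$ and $\sigma$ are mutually inverse. Your write-up merely spells out the commutativity check and the injectivity of $\sigma$ a bit more explicitly.
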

\begin{proof}
Let $\sigma:G\rightarrow H$ be a homomorphism such that $\epsilon\sigma=1_G$. Then we have a commutative diagram
$$
\xymatrix{
H\ar@<0.5ex>[rr]^{\sigma\epsilon}\ar@<-0.5ex>[rr]_{1_H}\ar@{->>}[rd]_\epsilon&&H\ar@{->>}[ld]^\epsilon\\
&G
}
$$
with $H$ perfect. By Lemma~\ref{unique} it follows that $\sigma\epsilon=1_H$ and thus $D\cong G$.
\end{proof}

\begin{lm}
\label{ccisuce}
Let $\epsilon:H\epi G$ be a central extension with $H$ centrally closed. Then $\epsilon$ is the universal central extension of $G$.
\end{lm}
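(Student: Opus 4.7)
The plan is to verify the universal property of $\epsilon$ directly, using the standard pullback trick. Given any central extension $\zeta:H'\epi G$, I will form the fibre product
$$
P=H\times_G H'=\{(h,h')\in H\times H'\mid\epsilon(h)=\zeta(h')\}
$$
with the two natural projections $p_1:P\to H$ and $p_2:P\to H'$, and produce the required lift $\eta:H\to H'$ as $p_2\circ s$ for a suitable section $s:H\to P$ of $p_1$.

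The first technical step is to check that $p_1:P\epi H$ is itself a central extension. Surjectivity is immediate from the surjectivity of $\zeta$. The kernel equals $\{1\}\times\Ker\zeta$, and it lies in $\Cent(P)$: any $(1,k)$ with $k\in\Ker\zeta$ commutes with an arbitrary $(h,h')\in P$ because $\Ker\zeta\subseteq\Cent(H')$ yields $kh'=h'k$.

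Since $H$ is centrally closed, $1_H:H\to H$ is by definition the universal central extension of $H$. Applying its universal property to the central extension $p_1$ gives a (necessarily unique) homomorphism $s:H\to P$ with $p_1 s=1_H$. Setting $\eta=p_2\circ s:H\to H'$, the defining relation of the fibre product yields
$$
\zeta\eta=\zeta p_2 s=\epsilon p_1 s=\epsilon,
$$
so $\eta$ is a lift of $\epsilon$ through $\zeta$. Uniqueness of such a lift then follows from Lemma~\ref{unique}, since $H$, being centrally closed, is perfect.

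I do not expect any serious obstacle here: the argument reduces to setting up the fibre product correctly and invoking the universal property of $1_H$. The only step requiring a little care is the verification that $\Ker p_1$ is central in $P$, which rests exactly on $\Ker\zeta\subseteq\Cent(H')$.
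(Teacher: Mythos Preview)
Your proof is correct. The fibre product $P=H\times_G H'$ with its projection $p_1$ is indeed a central extension of $H$, your section $s$ exists by the assumed universality of $1_H$, and the compatibility $\zeta p_2=\epsilon p_1$ on $P$ gives the lift; uniqueness via Lemma~\ref{unique} is exactly right.

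The paper argues differently. Instead of verifying the universal property of $\epsilon$ against an arbitrary $\zeta$, it first invokes Lemma~\ref{uceexists} to obtain the universal central extension $\pi:U\epi G$ (using that $G=\epsilon(H)$ is perfect), then takes the comparison map $\eta:U\to H$ coming from universality of $\pi$, observes that $\eta$ is surjective because $[H,H]=H$, and that $\Ker\eta\subseteq\Ker\pi\subseteq\Cent(U)$ so $\eta$ is itself a central extension of $H$. Applying the universal property of $1_H$ to $\eta$ splits it, and Lemma~\ref{perfectsplit} then forces $\eta$ to be an isomorphism, identifying $\epsilon$ with $\pi$.

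Your route is more direct: it avoids both Lemma~\ref{uceexists} (hence the free-group construction) and Lemma~\ref{perfectsplit}, at the cost of the pullback construction. The paper's route has the advantage of exhibiting $H$ concretely as isomorphic to the already-constructed universal object $U$, which fits the narrative flow of the section.
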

\begin{proof}
$G=\epsilon(H)$ is perfect so that one can consider the universal central extension $\pi:U\epi G$ and a group homomorphism $\eta:U\rightarrow H$ making the diagram
$$
\xymatrix{
U\ar[rr]^{\eta}\ar@{->>}[rd]_\pi&&H\ar@{->>}[ld]^\epsilon\\
&G
}
$$
commutative. Thus by Lemma~\ref{sct} $\eta(U)=[H,\,H]=H$ so $\eta:U\epi H$ is a central extension. But $H$ is centrally closed so using the universal property of $1_H$ we obtain that $\eta$ splits. Now use Lemma~\ref{perfectsplit} to finish the proof.
\end{proof}

\section{Odd unitary Steinberg group}

\begin{df}
Let $R$ be an associative ring with identity. An additive map $\overline{\phantom a}:R\rightarrow R$ such that $\overline1$ is invertible, $\overline{\overline a}=a$ and $\overline{ab}=\overline b\,{\overline1}\inv\overline a$ for any $a$, $b\in R$ is called {\it a pseudo-involution}. In this paper $R$ will always denote an associative ring with identity and pseudo-involution on it.
\end{df}

\begin{df}
A biadditive map $B:V_R\times V_R\rightarrow R$ is called {\it an anti-Hermitian form (on $V_R$)} if it satisfies the following axioms 
\begin{align*}
&1)\,B(ua,\,vb)=\overline{a}\overline{1}\inv B(u,\,v)b,\\
&2)\,B(u,\,v)=-\overline{B(v,\,u)}
\end{align*}
for any $u$, $v\in V$ and $a$, $b\in R$.
\end{df}

\begin{df}
Let $B$ be an anti-Hermitian form on $V_R$. Then the set $V\times R$ with the composition law given by
$$
(u,\,a)\dotplus(v,\,b)=(u+v,\,a+b+B(u,\,v))
$$
is called {\it the Heisenberg group $\mathfrak{H}$ of the form $B$}. 
\end{df}

\begin{rm*}
It is clear that $\dotplus$ is associative, $(0,\,0)$ is the identity element and the inverse is given by
$$
\dot-(u,\,a)=(-u,\,-a+B(u,u)),
$$
so Heisenberg group is actually a group.
\end{rm*}

\begin{df}
Let $B$ be an anti-Hermitian form on $V_R$ and $\mathfrak{H}$ be its Heisenberg group. We can define the right action of $R$ on $\mathfrak{H}$ by
$$
(u,\,a)\leftharpoonup b=(ub,\,\overline{b}\,\overline{1}\inv ab).
$$
\end{df}

\begin{rm*}
It's easy to see that
$$
\lambda\leftharpoonup a\leftharpoonup b=\lambda\leftharpoonup ab
$$
and
$$
(\lambda\dotplus\mu)\leftharpoonup a=\lambda\leftharpoonup a\dotplus\mu\leftharpoonup a
$$
for all $\lambda$, $\mu\in\mathfrak{H}$ and $a$, $b\in R$.
\end{rm*}

\begin{df}
Subgroups of a Heisenberg group $\mathfrak H$
$$
\mathfrak L_{\min}=\{(0,\,a+\overline a)\mid a\in R\}\quad\text{and}\quad\mathfrak L_{\max}=\{(u,\,a)\mid a=\overline a+B(u,\,u)\}
$$
are called {\it the minimal} and {\it the maximal odd form parameters}, respectively.
\end{df}

\begin{rm*}
It's clear that $\mathfrak L_{\min}\leq\mathfrak L_{\max}$ and that $\mathfrak L_{\min}$ and $\mathfrak L_{\max}$ are stable under the action of $R$.
\end{rm*}

\newcommand{\fp}{\mathfrak L}

\begin{df}
A subgroup $\fp$ of a Heisenberg group $\mathfrak H$ is called {\it an (odd) form {pa\-ra\-me\-ter}} if $\mathfrak L_{\min}\leq\fp\leq\mathfrak L_{\max}$ and $\fp$ is stable under the action of $R$.

The triple $(V,\,B,\,\fp)$ is called {\it an odd quadratic space}.

The orthogonal sum of two odd quadratic spaces $V$ and $V'$ is constructed as follows: the underlying module is $V\oplus V'$, the anti-Hermitian form is given by $(B+B')(u+u',\,v+v')=B(u,\,v)+B'(u',\,v')$ and the odd form parameter consists of all pairs $(u+u',\,a+a')$, where $(u,\,a)\in\fp$ and $(u',\,a')\in\fp'$.
\end{df}

\begin{df}
Let $V'$ and $V$ be two modules over $R$ with bilinear forms $B'$ and $B$. A module homomorphism $f:V'\rightarrow V$ is called an {\it isometry} if $B(fu,\,fv)=B'(u,\,v)$ for all $u$, $v\in V'$.

If $\fp$ is an odd form parameter for $V$ and $f$ and $g$ are isometries from $V'$ to $V$ such that $(fv-gv,\,B(gv-fv,gv))\in\fp$ for every $v\in V$ we say that $f$ and $g$ are equivalent modulo $\fp$ and write $f\equiv g\mod\fp$. One can see that it is an equivalence relation between the isometries.

The {\it odd unitary group} $\U(V,\,B,\,\fp)$ of the odd quadratic space $V$ is the group of all bijective isometries of V onto itself that are equivalent to the identity map modulo $\fp$.
\end{df}

\begin{df}
Consider a free module $H$ spanned on vectors $e_1$, $e_{-1}$ and anti-Hermitian form $B$ on it such that $B(e_1,\,e_{-1})=1$, $B(e_1,\,e_1)=B(e_{-1},\,e_{-1})=0$. Denote $\fp=\{(e_1a+e_{-1}b,\,\overline a{\overline1}\inv b+c+\overline c)\mid a,\,b,\,c\in R\}$. One can check that $\fp$ is a form parameter for $H$.

Denote by $H^n$ the orthogonal sum of $n$ copies of $H$. Its basis coming from the bases of the summands will be indexed as follows: $e_1,\ldots,e_n,e_{-n},\ldots,e_{-1}$.

Suppose we are given an odd quadratic space $(V,\,B,\,\fp)$. The orthogonal sum $H^n\oplus V$ is called an {\it odd hyperbolic unitary space} of rank $n$. The unitary group of $H^n\oplus V$ is called the {\it odd hyperbolic unitary group} and denoted $\U(2n,\,R,\,\fp)$.

Consider an odd quadratic space $(V,\,B,\,\fp)$. A pair of vectors $(u,\,v)$ from $V$ such that $B(u,\,v)=1$, $(u,\,0)$, $(v,\,0)\in\fp$ is called a {\it hyperbolic pair}. The greatest $n$ satisfying the condition that there exist $n$ mutually orthogonal hyperbolic pairs in $V$ is called the {\it Witt index} of $V$ and denoted by $\ind(V,\,B,\,\fp)$. It is easy to see that the Witt index coincides with the greatest $n$ satisfying the condition that there exists an isometry $f$ of the space $H^n$ to $V$ such that $(fu,\,a)\in\fp$ for $(u,\,a)$ from the form parameter of $H^n$.

Suppose that the Witt index of $V$ is at least $n$. Fix an embedding of $H^n$ to $V$, i.e. fix elements $e_1,\ldots,e_n,e_{-n},\ldots,e_{-1}$ in $V$ such that $(e_i,\,e_j)=0$ for $i\neq j$, $(e_i,\,e_{-i})=1$ for $i\in\{1,\ldots,n\}$, $(e_i,\,0)\in\fp$. Define $V_0$ as the orthogonal complement to $\sum_{i=1}^{-1}e_iR$ in $V$ (it can be defined since the restriction of $B$ to this subspace is nonsingular), $B_0$ as the restriction of $B$ to $V_0$ and $\fp_0$ as the restriction of $\fp$ to $V_0$. Then it is easy to see that $V$ is isometric to the odd hyperbolic space $H^n\oplus V_0$. Thus the unitary group of an odd quadratic space with Witt index at least $n$ can be identified with the odd hyperbolic unitary group $\U(2n,\,R,\,\fp)$ corresponding to an appropriate odd form parameter. 
\end{df}

\begin{df}
Let $\U(2n,\,R,\,\fp)$ be an unitary group of odd hyperbolic space $V$, denote by $\Omega_+$, $\Omega_{-}$ the sets $\{1,\ldots,n\}$, $\{-n,\ldots,-1\}$ respectively, set $\Omega=\Omega_+\cup\Omega_-$. Set $\varepsilon_i=\overline1\inv$ if $i\in\Omega_+$ and $\varepsilon_i=-1$ if $i\in\Omega_-$. 

For $i\in\Omega$, $j\in\Omega\setminus\{\pm j\}$, $a\in R$, $(u,\,b)$ denote by $T_{ij}(a)$ linear trans\-for\-mation of $V$ to itself
$$
T_{ij}(a):\ w\mapsto w+e_{-j}\varepsilon_{-j}\overline a{\overline1}\inv(e_i,\,w)-e_{i}a\varepsilon_j(e_{-j},\,w)
$$
and by $T_i(u,\,b)$ transformation 
$$
T_i(u,\,b):\ w\mapsto w-e_i\varepsilon_i(u,\,w)-e_i\varepsilon_{i}b\varepsilon_{-i}(e_i,\,w)+u\varepsilon_{-i}(e_i,\,w). 
$$
The transformations $T_{ij}(a)$ and $T_i(u,\,b)$ are called {\it(odd unitary) elementary trans\-vec\-tions}. One can check that elementary transvections lie in $\U(2n,\,R,\,\fp)$ (see~\cite{VPoug}). The subgroup of the hyperbolic unitary group generated by elementary trans\-vec\-tions is called an {\it odd hyperbolic elementary group} and denoted $\EU(2n,\,R,\,\fp)$.
\end{df}

The linear Steinberg group is defined by ``elementary'' relations between linear transvections. Now we will define the unitary Steinberg group by the relations between unitary transvections.

\begin{df}
Let $n\geq3$. The odd unitary Steinberg group $\StU(2n,\,R,\,\fp)$ is the group defined by generators $\{X_{ij}(a)\mid i,\,j\in\Omega,\ i\not\in\{\pm j\},\ a\in R\}\cup\{X_i(\xi)\mid i\in\Omega,\ \xi\in\mathfrak{L}\}$ and relations

\setcounter{equation}{-1}
\renewcommand{\theequation}{R\arabic{equation}}
\begin{align}
&X_{ij}(a)=X_{-j,-i}(\varepsilon_{-j}\overline{a}\varepsilon_i),\\
&X_{ij}(a)X_{ij}(b)=X_{ij}(a+b),\\
&X_i(\xi)X_i(\zeta)=X_i(\xi\dotplus\zeta),\\
&[X_{ij}(a),\,X_{hk}(b)]=1,\text{ for }h\not\in\{j,-i\},\ k\not\in\{i,-j\},\\
&[X_i(\xi),\,X_{jk}(a)]=1,\text{ for }j\neq-i,\ k\neq i,\\
&[X_{ij}(a),\,X_{jk}(b)]=X_{ik}(ab),\\
&[X_i(u,a),\,X_j(v,b)]=X_{i,-j}(\varepsilon_iB(u,\,v)),\text{ for }i\not\in\{\pm j\},\\
&[X_i(u,a),\,X_i(v,b)]=X_i(0,B(u,\,v)-B(v,\,u)),\\
&[X_i(u,\,a),\,X_{-i,j}(b)]=X_{ij}(\varepsilon_iab)X_{-j}((u,-\overline{a})\leftharpoonup b),\\
&[X_{ij}(a),\,X_{j,-i}(b)]=X_i(0,-\varepsilon_{-i}\overline{1}ab+\overline{b}\,\overline{1}^{-1}\overline{a}\varepsilon_i),
\end{align}
where commutators are left-normed.
\end{df}

\begin{rmk}
Relation~R1 implies that ${X_{ij}(0)}^2=X_{ij}(0)$, i.e. $X_{ij}(0)=1$ and thus that $X_{ij}(-a)=X_{ij}(-a)X_{ij}(a){X_{ij}(a)}\inv={X_{ij}(a)}\inv$. Similarly, R2 implies that $X_i(0,\,0)=1$ and $X_i(\dot-(u,\,a))={X_i(u,\,a)}\inv$.
\end{rmk}

\begin{rmk}
Relation~R7 is the direct consequence of the relation~R2. It is listed to emphasize that $X_i(u,\,a)$ and $X_i(v,\,b)$ do not commute in general.
\end{rmk}

\setcounter{rmk}{0}

One can check the following result.

\begin{lm}
Relations {\rm R0--R9} hold for elementary transvections $T_{ij}(a)$ and $T_i(u,\,a)$. Thus when $n\geq3$ there is a natural epimorphism from $\st$ to $\EU(2n,\,R,\,\fp)$ sending the generators of the Steinberg group to the cores\-pon\-ding elementary transvections.
\end{lm}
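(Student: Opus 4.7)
The plan is to verify the ten identities R0--R9 directly, as equalities of linear self-maps of $V=H^n\oplus V_0$, and then observe that the second assertion of the lemma is immediate: once R0--R9 hold for the $T_{ij}(a)$ and $T_i(u,b)$, the universal property of $\st$ produces a homomorphism $\st\to\EU(2n,R,\fp)$ sending generators to generators, and this homomorphism is surjective because by definition $\EU(2n,R,\fp)$ is generated by the elementary transvections.

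For each relation the recipe is the same: fix an arbitrary $w\in V$, evaluate both sides on $w$ using the explicit formulas
\[
T_{ij}(a)\,w=w+e_{-j}\varepsilon_{-j}\overline a\,\overline1\inv(e_i,w)-e_ia\varepsilon_j(e_{-j},w),
\]
\[
T_i(u,b)\,w=w-e_i\varepsilon_i(u,w)-e_i\varepsilon_ib\varepsilon_{-i}(e_i,w)+u\varepsilon_{-i}(e_i,w),
\]
and compare coefficients along the hyperbolic basis $e_{\pm1},\dots,e_{\pm n}$ and the orthogonal summand $V_0$. From these formulas one reads off $T_{ij}(a)\inv=T_{ij}(-a)$ and $T_i(u,b)\inv=T_i(\dot-(u,b))$, which one needs in order to expand commutators $ABA\inv B\inv$.

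The easy relations settle quickly. R0 is a bookkeeping identity using $\overline{\overline a}=a$ and $\varepsilon_{-i}\varepsilon_i=-\overline1\inv$; R1 and R2 are immediate from linearity of the formulas in their last argument combined with the Heisenberg law $\dotplus$; and R3, R4 are trivial because the index restrictions force the correction terms of the two transvections to be supported on disjoint coordinate lines, so the two linear maps literally commute. The real work is in the Chevalley-type commutator formulas R5--R9, where on both sides one expands, applies the axioms of the anti-Hermitian form $B(u,v)=-\overline{B(v,u)}$, $B(ua,vb)=\overline a\overline 1\inv B(u,v)b$, and the action $\leftharpoonup$, and collects terms.

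I expect the main obstacle to be R8 and R9, which mix the "long-root" transvections $T_{ij}$ with the "short-root" transvections $T_i(u,b)$ and produce cross-terms that do not obviously collapse. In R8 the second factor $X_{-j}((u,-\overline a)\leftharpoonup b)$ on the right is precisely the short-root correction needed to absorb the Heisenberg cross-term arising when the order of composition of $T_i(u,a)$ and $T_{-i,j}(b)$ is reversed; checking that the decomposition matches the composition law $\dotplus$ rather than ordinary addition is the delicate step. The analogous phenomenon occurs for R9 with the Hermitian bracket $-\varepsilon_{-i}\overline1 ab+\overline b\overline 1\inv\overline a\varepsilon_i$ playing the role of the Heisenberg symbol. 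All of these computations have been carried out in Petrov~\cite{VPoug}, which we cite for the details; no new ingredient is required here.
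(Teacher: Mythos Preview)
Your proposal is correct and matches the paper's treatment: the paper does not supply a proof of this lemma at all, introducing it only with ``One can check the following result'' and implicitly deferring to Petrov~\cite{VPoug}, where these transvections and their relations were first established. Your sketch of the direct verification together with the citation to~\cite{VPoug} is therefore entirely in line with---indeed more detailed than---what the paper itself provides.
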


\begin{df}
Define $^{\StU}\!\U_1(2n,\,R,\,\fp)$ to be a subgroup of $\st$ ge\-ne\-ra\-ted by $\big\{X_{n,i}(a)\mid i\in\Omega\setminus\{\pm n\},\ a\in R\big\}\cup\big\{X_n(\zeta)\mid\zeta\in\fp\big\}$ and $^{\EU}\!\U_1(2n,\,R,\,\fp)$ to be its image in the $\EU(2n,\,R,\,\fp)$.
\end{df}

\begin{lm}
\label{ur}
$^{\StU}\!\U_1(2n,\,R,\,\fp)\cong\,^{\EU}\!\U_1(2n,\,R,\,\fp)$.
\end{lm}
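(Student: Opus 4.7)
The plan is to identify $^{\StU}\!\U_1(2n,\,R,\,\fp)$ with $^{\EU}\!\U_1(2n,\,R,\,\fp)$ by producing a canonical normal form for elements of the Steinberg-side group and then checking, by explicit matrix inspection on the elementary-group side, that the natural epimorphism separates distinct normal forms. Surjectivity is built into the definition of $^{\EU}\!\U_1$, so this suffices for the isomorphism.

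First I would analyse the commutation pattern between the generators of $^{\StU}\!\U_1$. By~R4 each $X_n(\zeta)$ commutes with every $X_{n,k}(a)$, and by~R2 the map $\zeta\mapsto X_n(\zeta)$ is a group homomorphism from the Heisenberg group $\fp$. By~R3 two transvections $X_{n,i}(a)$ and $X_{n,j}(b)$ commute whenever $j\neq -i$; for $j=-i$ one uses~R0 to rewrite $X_{n,-i}(b)=X_{i,-n}(\varepsilon_i\overline b\,\varepsilon_n)$ and then~R9 to conclude that the commutator $[X_{n,i}(a),\,X_{n,-i}(b)]$ lies in $\langle X_n(\fp)\rangle$. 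A routine induction on word length then shows that every element of $^{\StU}\!\U_1$ can be brought to the canonical form
$$
X_{n,1}(a_1)X_{n,2}(a_2)\cdots X_{n,n-1}(a_{n-1})X_{n,-(n-1)}(b_{n-1})\cdots X_{n,-1}(b_1)X_n(\xi).
$$
Right multiplication by $X_n(\zeta)$ slides past every $X_{n,j}$-factor by~R4 and merges with $X_n(\xi)$ by~R2; right multiplication by $X_{n,k}(c)$ slides leftward past the later $X_{n,j}$-factors by~R3, producing at most one $X_n(\fp)$-commutator when it crosses $X_{n,-k}(b_k)$, which can be pushed to the far right by~R4 and absorbed into $X_n(\xi)$, after which $X_{n,k}(c)$ combines with $X_{n,k}(a_k)$ by~R1.

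Second I would verify that distinct parameter tuples $(a_1,\ldots,a_{n-1},b_{n-1},\ldots,b_1,\xi)$ produce distinct images in $\EU(2n,\,R,\,\fp)$. Applying the formulas defining $T_{n,j}(a)$ and $T_n(u,c)$ to appropriate basis vectors (most conveniently $e_{-n}$, whose orbit under the tail factors is easy to track), one reads off the scalars $a_j$, $b_j$ directly from individual matrix entries of the product up to the two-sided unit factors coming from the signs $\varepsilon_{\pm j}$ and from $\overline 1$, and then from bijectivity of the pseudo-involution. Once they are subtracted, the residual factor is $T_n(u,c)$; its action on $e_{-n}$ recovers first $u$ from the components outside the hyperbolic part and then $c$ from the coefficient of $e_n$. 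Since $\fp\hookrightarrow V\times R$, this recovers $\xi=(u,c)$.

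Combining the two steps yields injectivity of the natural surjection $^{\StU}\!\U_1\twoheadrightarrow{}^{\EU}\!\U_1$, and hence the claimed isomorphism. The main obstacle is the first step: the rewriting procedure terminates only because the auxiliary $X_n$-terms produced when swapping $X_{n,i}(a)$ past $X_{n,-i}(b)$ commute, via~R4, with every remaining $X_{n,j}$-generator, so that they can be transported to the rightmost position without spawning further corrections.
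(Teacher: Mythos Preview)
Your proposal is correct and follows essentially the same route as the paper: establish a normal form for elements of $^{\StU}\!\U_1$ using the commutation relations R1--R4 together with R0/R9 for the pair $X_{n,i}$, $X_{n,-i}$, and then check uniqueness of the parameters by passing to the elementary side. The only cosmetic differences are that the paper places the $X_n(\zeta)$ factor on the left rather than the right (immaterial by~R4) and argues uniqueness by subtracting two decompositions and noting the resulting product of $T$'s equals~$1$, whereas you instead read off the parameters directly from the action on $e_{-n}$; these are equivalent inspections of the same matrix.
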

\begin{proof}
First observe that $[X_n(\zeta),\,X_{n,i}(a)]=1=[X_{n,i}(a),\,X_{n,j}(b)]$ for $i\in\Omega\setminus\{\pm n\}$, $j\in\Omega\setminus\{\pm n,\pm i\}$ and $[X_{n,i}(a),\,X_{n,-i}(b)]=X_n(\xi)$ for some $\xi\in\fp$ so that any $x\in\,^{\StU}\!\U_1(2n,\,R,\,\fp)$ can be decomposed as 
$$x=X_n(\zeta)\cdot X_{n,1}(a_1)\cdot X_{n,2}(a_2)\cdot\ldots\cdot X_{n,-1}(a_{-1}).$$ 
Now we will check that this decomposition is unique. Let 
$$
X_n(\zeta)\cdot X_{n,1}(a_1)\cdot X_{n,2}(a_2)\cdot\ldots\cdot X_{n,-1}(a_{-1})=X_n(\xi)\cdot X_{n,1}(b_1)\cdot X_{n,2}(b_2)\cdot\ldots\cdot X_{n,-1}(b_{-1}).
$$ 
Then 
\begin{multline*}
1=X_n(\zeta)\cdot X_{n,1}(a_1)\cdot\ldots\cdot X_{n,-1}(a_{-1})\cdot X_{n,-1}(-b_{-1})\cdot\ldots\cdot X_{n,1}(-b_1)\cdot X_{n}(\xi)=\\=X_n(\eta)\cdot X_{n,1}(a_1-b_1)\cdot\ldots\cdot X_{n,-1}(a_{-1}-b_{-1}).
\end{multline*}
Then also $T_n(\eta)\cdot T_{n,1}(a_1-b_1)\cdot\ldots\cdot T_{n,-1}(a_{-1}-b_{-1})=1$ thus $a_i=b_i$ for all $i\in\Omega\setminus\{\pm n\}$ and thus $\zeta=\xi$. Now the claimed result is obvious.
\end{proof}

\begin{df}
One can define 
$$
^{\StU}\!\U_1^-(2n,\,R,\,\fp)=\big\langle X_{-n,i}(a),\ X_{-n}(\zeta)\mid i\in\Omega\setminus\{\pm n\},\ a\in R,\ \zeta\in\fp\big\rangle,
$$ 
its image $^{\EU}\!\U_1^-(2n,\,R,\,\fp)$ and check that they are isomorphic.
\end{df}

\begin{lm}
\label{urgen}
Steinberg group $\st$ is generated by $^{\StU}\!\U_1(2n,\,R,\,\fp)$ and $^{\StU}\!\U_1^-(2n,\,R,\,\fp)$.
\end{lm}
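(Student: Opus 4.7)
I plan to show that every defining generator of $\st$ lies in the subgroup $G=\langle{}^{\StU}\!\U_1(2n,\,R,\,\fp),\,{}^{\StU}\!\U_1^-(2n,\,R,\,\fp)\rangle$. My first observation, via R0, will be that ${}^{\StU}\!\U_1(2n,\,R,\,\fp)$ also contains every $X_{i,-n}(a)$ with $i\in\Omega\setminus\{\pm n\}$ (by rewriting the generator $X_{n,-i}$ in its alternative form), and symmetrically ${}^{\StU}\!\U_1^-(2n,\,R,\,\fp)$ contains every $X_{i,n}(a)$. Consequently every generator $X_{\pm n}(\xi)$ and every $X_{ij}(a)$ with at least one index equal to $\pm n$ lies in $G$ immediately.

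Next, to handle $X_{ij}(a)$ with $i,\,j\in\Omega\setminus\{\pm n\}$ and $i\neq\pm j$, I will apply R5 with intermediate index $n$:
$$X_{ij}(a)=[X_{in}(a),\,X_{nj}(1)],$$
where the first factor lies in ${}^{\StU}\!\U_1^-(2n,\,R,\,\fp)$ and the second in ${}^{\StU}\!\U_1(2n,\,R,\,\fp)$. The hypothesis $n\geq3$ is precisely what makes such a third index available.

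The only case requiring genuine work is $X_i(\xi)$ with $i\in\Omega\setminus\{\pm n\}$ and $\xi\in\fp$. Here I will specialize R8 by taking its outer index to be $n$, its second subscript to be $-i$ and $b=1$, which yields
$$[X_n(u,\,a),\,X_{-n,-i}(1)]=X_{n,-i}(\varepsilon_n a)\cdot X_i(u,\,-\overline a),$$
thereby exhibiting $X_i(u,\,-\overline a)$ as an element of $G$. The main obstacle — though I expect it to amount to a short calculation in the form parameter — will be to verify that $\{(u,\,-\overline a)\mid(u,\,a)\in\fp\}$ exhausts $\fp$. I plan to derive this from the identity
$$(u,\,-\overline a)=(u,\,a)\dotplus\bigl(0,\,-(a+\overline a)\bigr),$$
using that $\bigl(0,\,-(a+\overline a)\bigr)=\bigl(0,\,(-a)+\overline{(-a)}\bigr)\in\mathfrak L_{\min}\subseteq\fp$; the map $(u,\,a)\mapsto(u,\,-\overline a)$ is then an involution of $\fp$ and in particular surjective, so that $X_i(\xi)\in G$ for every $\xi\in\fp$ and every $i\in\Omega\setminus\{\pm n\}$, which will complete the proof.
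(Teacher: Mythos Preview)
Your proof is correct and follows precisely the approach the paper indicates (the paper gives only the hint ``Use R5 and R8''); you have simply supplied the details, including the use of R0 to recognize that $X_{i,\pm n}(a)$ lie in the appropriate unipotent subgroups and the verification that $(u,a)\mapsto(u,-\overline a)$ is a self-bijection of $\mathfrak L$.
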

\begin{hint}
Use R5 and R8.
\end{hint}

\begin{df}
Define $\KU(2n,\,R,\,\fp)$ to be the kernel of natural epimorphism of the Steinberg group $\st$ onto $\EU(2n,\,R,\,\fp)$.
$$
\KU(2n,\,R,\,\fp)\rightarrowtail\st\epi\EU(2n,\,R,\,\fp)
$$
\end{df}

\begin{lm}
\label{cent}
Consider a natural mapping $\phi_n:\st\rightarrow\StU(2n+2,\,R,\,\fp)$, sending $X_{ij}(a)$ to $X_{ij}(a)$ and $X_i(\zeta)$ to $X_i(\zeta)$. Then $$\phi_n(\KU(2n,\,R,\,\fp))\subseteq\Cent(\StU(2n+2,\,R,\,\fp)).$$
\end{lm}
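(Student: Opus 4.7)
The plan is to show that $\phi_n(g)$ commutes with every element of $\sta{2n+2}$ for each $g\in\KU(2n,\,R,\,\fp)$. By Lemma~\ref{urgen} applied to rank $n+1$, the group $\sta{2n+2}$ is generated by $^{\StU}\!\U_1(2n+2,\,R,\,\fp)$ together with $^{\StU}\!\U_1^-(2n+2,\,R,\,\fp)$, so it suffices to verify commutation with every element of each of these subgroups. Thanks to the hyperbolic symmetry R0 I would only treat the first case.

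First I would check that conjugation by $\phi_n(g)$ preserves the subgroup $^{\StU}\!\U_1(2n+2,\,R,\,\fp)$. Since $g$ is a word in the generators of $\st$, whose indices all lie in $\{\pm1,\ldots,\pm n\}$, it is enough to run through R0--R9 and compute the commutator of each such generator with $X_{n+1,k}(a)$ or $X_{n+1}(\zeta)$, $k\in\{\pm1,\ldots,\pm n\}$. In the generic configuration the commutator vanishes by R3 or R4; in the degenerate ones R5, R6, R8 or R9 produces a product of terms of the form $X_{n+1,\ast}(\ast)$, $X_{n+1}(\ast)$, or $X_{\ast,-(n+1)}(\ast)$, and the last type is rewritten as $X_{n+1,\ast}(\ast)$ via R0. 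In every case the output lies in $^{\StU}\!\U_1(2n+2,\,R,\,\fp)$.

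Second, by naturality the image of $\phi_n(g)$ under the Steinberg projection $\sta{2n+2}\epi\EU(2n+2,\,R,\,\fp)$ equals the image of $g$ under $\st\epi\EU(2n,\,R,\,\fp)$ composed with the stable embedding $\EU(2n,\,R,\,\fp)\hookrightarrow\EU(2n+2,\,R,\,\fp)$; the first map annihilates $g$ by the definition of $\KU$. Consequently the conjugation action of this image on $^{\EU}\!\U_1(2n+2,\,R,\,\fp)$ is trivial. By Lemma~\ref{ur} (applied to rank $n+1$) the projection restricts to an isomorphism $^{\StU}\!\U_1(2n+2,\,R,\,\fp)\cong\,^{\EU}\!\U_1(2n+2,\,R,\,\fp)$; combined with the first step this forces the conjugation action of $\phi_n(g)$ on $^{\StU}\!\U_1(2n+2,\,R,\,\fp)$ itself to be trivial, as required.

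The main obstacle will be the normalization check in the first step: it is routine in the vanishing cases, but one has to be careful in the configurations governed by R5, R6, R8, R9, where the output is non-trivial and must be shown to stay inside $^{\StU}\!\U_1(2n+2,\,R,\,\fp)$. This is precisely where the duality R0, identifying $X_{\ast,-(n+1)}(\ast)$ with $X_{n+1,-\ast}(\ast)$, is indispensable.
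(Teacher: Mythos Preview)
Your proposal is correct and follows essentially the same approach as the paper: reduce via Lemma~\ref{urgen} to the generators of $^{\StU}\!\U_1(2n+2,\,R,\,\fp)$ and $^{\StU}\!\U_1^-(2n+2,\,R,\,\fp)$, check that conjugation by $\phi_n(g)$ preserves each of these subgroups using the Steinberg relations, and then invoke Lemma~\ref{ur} together with $\phi_n(g)\in\KU(2n+2,\,R,\,\fp)$ to conclude triviality of the action. The only minor imprecision is your appeal to R0 to reduce the $^{\StU}\!\U_1^-$ case to the $^{\StU}\!\U_1$ case: R0 alone does not literally interchange these two subgroups, but the argument for $^{\StU}\!\U_1^-$ is word-for-word the same with $n+1$ replaced by $-(n+1)$, which is what the paper records as ``Similarly''.
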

\begin{proof}
Fix $x\in\KU(2n,\,R,\,\fp)$ and $y\in\,^{\StU}\!\U_1(2n+2,\,R,\,\fp)$. Steinberg relations imply that $\phi_n(x)\cdot y\cdot\phi_n(x)\inv\in\,^{\StU}\!\U_1(2n+2,\,R,\,\fp)$. But $\phi_n(x)\in\KU(2n+2,\,R,\,\fp)$ so images of $\phi_n(x)\cdot y\cdot\phi_n(x)\inv$ and $y$ coincide in $^{\EU}\!\U_1(2n+2,\,R,\,\fp)$ and thus by lemma~\ref{ur} $\phi_n(x)\cdot y\cdot\phi_n(x)\inv=y$. Similarly, for any $z\in\,^{\StU}\!\U_1^-(2n+2,\,R,\,\fp)$ one has $[\phi_n(x),\,z]=1$. Now use Lemma~\ref{urgen}.
\end{proof}

\begin{rm*}
Centrality of $\KU(2n,\,R,\,\fp)$ in $\st$ is not so easy to obtain (see~\cite{vdK,Tul} for the linear case).
\end{rm*}

\begin{df}
Define $\StU(\infty,\,R,\,\fp)=\StU(R,\,\fp)$, $\EU(R,\,\fp)$ and $\KU(R,\,\fp)$ as direct limits of corresponding sequences.
$$
\xymatrix{
\ldots\ar[r]&\KU(2n,\,R,\,\fp)\ar[r]\ar@{>->}[d]&\KU(2n+2,\,R,\,\fp)\ar[r]\ar@{>->}[d]&\ldots\\
\ldots\ar[r]&\StU(2n,\,R,\,\fp)\ar[r]^{\!\!\!\!\phi_n}\ar@{->>}[d]&\StU(2n+2,\,R,\,\fp)\ar[r]\ar@{->>}[d]&\ldots\\
\ldots\ar@{^(->}[r]&\EU(2n,\,R,\,\fp)\ar@{^(->}[r]&\EU(2n+2,\,R,\,\fp)\ar@{^(->}[r]&\ldots
}
$$
\end{df}

\begin{rm*}
Lemma~\ref{cent} implies that
$$
\KU(R,\,\fp)\rightarrowtail\StU(R,\,\fp)\epi\EU(R,\,\fp)
$$
is a central extension.
\end{rm*}

\begin{lm}
\label{stisperfect}
An odd unitary Steinberg group $\st$ is perfect.
\end{lm}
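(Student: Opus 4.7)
The plan is to show that every generator of $\st$ lies in the commutator subgroup $[\st,\st]$. There are two families of generators, $X_{ij}(a)$ and $X_k(\xi)$, and the assumption $n\geq 3$ gives $|\Omega|\geq 6$, which leaves enough room to invoke the three-index Steinberg-type relations R5 and R8.

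First I would handle $X_{ij}(a)$: picking any $h\in\Omega$ distinct from $\pm i$ and $\pm j$ (possible since $|\Omega|\geq 6$), relation~R5 yields
$$
X_{ij}(a)=X_{ij}(a\cdot 1)=[X_{ih}(a),\,X_{hj}(1)],
$$
a commutator.

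For $X_k(\xi)$ with $\xi=(w,\,c)\in\fp$, I would set $j=-k$ and pick any $i\in\Omega$ with $i\neq\pm k$, then use R8. But first one needs the auxiliary element $(w,\,-\overline{c})$ to lie in $\fp$: indeed $(0,\,(-c)+\overline{(-c)})=(0,\,-c-\overline{c})\in\fp_{\min}\subseteq\fp$, and
$$
(w,\,c)\dotplus(0,\,-c-\overline{c})=(w,\,-\overline{c}),
$$
so $X_i(w,\,-\overline{c})$ is a genuine generator. Applying R8 with $u=w$, $a=-\overline{c}$, $b=1$ (and using $(w,\,-\overline{a})\leftharpoonup 1=(w,\,c)$ since $-\overline{-\overline{c}}=c$) gives
$$
[X_i(w,\,-\overline{c}),\,X_{-i,j}(1)]=X_{ij}(-\varepsilon_i\overline{c})\cdot X_{-j}(w,\,c)=X_{ij}(-\varepsilon_i\overline{c})\cdot X_k(\xi),
$$
so that
$$
X_k(\xi)=X_{ij}(\varepsilon_i\overline{c})\cdot[X_i(w,\,-\overline{c}),\,X_{-i,j}(1)],
$$
a product of two commutators (the first factor by the preceding paragraph).

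The only genuinely delicate point is verifying membership of the auxiliary element $(w,\,-\overline{c})$ in $\fp$, which rests on $\fp_{\min}\leq\fp$ and the definition of $\dotplus$; everything else is routine index bookkeeping and a direct reading of R5 and R8. I do not expect any obstacle beyond this, since the relations R5 and R8 are precisely designed to express diagonal generators as commutators once a third index is available.
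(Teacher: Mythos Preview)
Your proposal is correct and follows exactly the approach indicated by the paper, which gives only the hint ``Use the fact that $n\geq3$ and Relations R5 and R8.'' You have carried out precisely this: R5 expresses each $X_{ij}(a)$ as a commutator once a third index $h\notin\{\pm i,\pm j\}$ is available, and R8 (together with the first step) expresses each $X_k(\xi)$ as a product of commutators once an index $i\neq\pm k$ is chosen; your verification that $(w,-\overline{c})\in\fp$ via $\fp_{\min}\leq\fp$ is the only point requiring care and is handled correctly.
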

\begin{hint}
Use the fact that $n\geq3$ and Relations R5 and R8.
\end{hint}

We will need the results of the following section to compute the Schur multiplier of $\st$.

\section{Main lemma}

\begin{ml}
Let $n$ be integer such that $n\geq 4$ or $n=\infty$, $\epsilon$ be a central extension of $\st$ such that property \dag\ holds:
\begin{equation*}
[\epsilon\inv X_{ij}(a),\,\epsilon\inv X_{kh}(b)]=1,
\eqno{(\dag)}
\end{equation*}
where $a$, $b$ are elements of $R$ and $i$, $j$, $k$ and $h$ are indices from $\Omega$ such that $\Card\{i,-i,j,-j,k,-k,h,-h\}=8$, i.e. any two of these four indices neither coincide nor have a zero sum. Then $\epsilon$ splits.
\end{ml}

In this section $n$ and $\epsilon$ will be always as in the Main lemma, $\Omega$ will denote $\{1,\ldots,n,-n,\ldots,-1\}$ for integer $n$ and $\{1,\ldots,n,\ldots,-n,\ldots,-1\}$ for $n=\infty$. 

The idea of the proof is to find elements $S_{ij}(a)\in\epsilon\inv X_{ij}(a)$ and $S_i(u,\,a)\in\epsilon\inv X_i(u,\,a)$ such that relations R0--R9 hold for these elements. It will follow from this fact that there is a homomorphism $\sigma$ from $\st$ to the group spanned on these elements sending generators to generators what will immediately imply that $\epsilon$ splits. Now we will give a detailed proof of the Main lemma, but indeed all lemmae proved in this section correspond to one of the relations R0--R9.

The following commutation identities will be essentially used throughout this section.

\begin{lm}
Let $G$ be a group, $x$, $y$, $z$, $y_1,\ldots,y_m$ be elements of $G$. For any $a$, $b\in G$ we denote $aba\inv$ by $^ab$ and left-normed commutator $aba\inv b\inv$ by $[a,\,b]$. Then straightforward calculation shows that
\setcounter{equation}{0}
\renewcommand{\theequation}{C\arabic{equation}}
\begin{align}
&[xy,\,z]=\,^x[y,\,z]\cdot[x,\,z],
\\
&[x,\,yz]=[x,\,y]\cdot\,^y[x,\,z],
\\
&[x,\,y_1\cdot\ldots\cdot y_m]=[x,\,y_1]\cdot\,^{y_1}[x,\,y_2]\cdot\,^{y_1y_2}[x,\,y_3]\cdot\ldots\cdot\,^{y_1\cdot\ldots\cdot y_{m-1}}[x,\,y_m],
\\
&[x,\,y]\cdot[x,\,z]=[x,\,yz]\cdot[y,\,[z,\,x]],
\\
&^y[x,\,[y^{-1},\,z]]\cdot\,^z[y,\,[z^{-1},\,x]]\cdot\,^x[z,\,[x^{-1},\,y]]=1
\\
&^z[y,\,[z^{-1},\,x]]=[^zy,\,[x,\,z]].
\end{align}
\end{lm}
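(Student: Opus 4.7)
My plan is to verify all six identities by direct expansion in the free group on the relevant generators, using only the definitions $[a,b]=aba\inv b\inv$ and ${}^a b=aba\inv$ and the cancellation of trivial subwords $a\inv a$ and $aa\inv$. I will use the earlier identities to shortcut the later ones, in the order they are listed.

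Identities C1 and C2 each reduce to inserting one trivial subword. For C1, expand $[xy,z]=xyzy\inv x\inv z\inv$; on the right, ${}^x[y,z]\cdot[x,z]=x(yzy\inv z\inv)x\inv\cdot(xzx\inv z\inv)$ matches after the middle $x\inv x$ collapses. C2 is entirely symmetric, with a $y\inv y$ cancellation on the right-hand side. Identity C3 is then obtained by induction on $m$: the base case $m=1$ is trivial, and the inductive step is a single application of C2 to the split $(y_1\cdots y_{m-1})\cdot y_m$, followed by the inductive hypothesis applied to $[x,y_1\cdots y_{m-1}]$.

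For C4, I would apply C2 to rewrite $[x,yz]=[x,y]\cdot{}^y[x,z]$, reducing the claim to $[x,z]={}^y[x,z]\cdot[y,[z,x]]$. Using $[z,x]\inv=[x,z]$, the second factor becomes $y[z,x]y\inv[x,z]$, and multiplying on the left by ${}^y[x,z]=y[x,z]y\inv$ gives $y[x,z][z,x]y\inv[x,z]=[x,z]$, as required. Identity C6 is a one-line calculation after writing $[z\inv,x]=z\inv xzx\inv$: both sides, fully expanded and after the obvious $z\inv z$ and $zz\inv$ cancellations, collapse to the common word $zyz\inv xzx\inv y\inv xz\inv x\inv$.

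The only identity that requires genuine bookkeeping is C5, the Hall--Witt relation, and this is the part I expect to be the main obstacle. My approach is to expand each of the three factors ${}^y[x,[y\inv,z]]$, ${}^z[y,[z\inv,x]]$, ${}^x[z,[x\inv,y]]$ as a word of length twelve in $x^{\pm1},y^{\pm1},z^{\pm1}$, producing a length-$36$ product. Beginning from either end, successive cancellations at the boundaries of the three factors (first $z\inv z$, then $y\inv y$, then $x\inv x$) and the ensuing inner cancellations exposed at each stage collapse the word entirely to the empty word. The pattern of cancellations is forced by the cyclic symmetry $(x,y,z)\mapsto(y,z,x)$, so no choices are required once one factor is written out and the other two are obtained by relabelling. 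Should the direct telescoping feel opaque, an alternative is to quote any standard form of the Hall--Witt identity (for instance $[[x,y\inv],z]^y\cdot[[y,z\inv],x]^z\cdot[[z,x\inv],y]^x=1$) and translate it into the present notation using the conversion $u^v=v\inv uv={}^{v\inv}u$ together with C1 and C2.
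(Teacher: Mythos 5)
Your verification is correct, and it is precisely the ``straightforward calculation'' that the paper itself invokes without writing out: direct expansion of $[a,b]=aba\inv b\inv$ and $^ab=aba\inv$ in the free group, with C3 by induction on C2 and C5 by telescoping cancellation of the three cyclically permuted factors. All six identities check out as you describe (in particular your common reduced word for C6 and the reduction of C4 to $[x,z]={}^y[x,z]\cdot[y,[z,x]]$ are exactly right), so there is nothing to add.
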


The next lemma is a stronger version of the Property \dag.

\begin{lm}
\label{r3}
Let $i\in\Omega$, $j\in\Omega\setminus\{\pm i\}$, $k\in\Omega\setminus\{-i,j\}$, $h\in\Omega\setminus\{i,-j,\pm k\}$. Then for any $a$, $b\in R$
$$
[\epsilon\inv X_{ij}(a),\,\epsilon\inv X_{kh}(b)]=1.
$$
\end{lm}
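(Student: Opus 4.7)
The plan is to reduce the statement to the hypothesis $\dagger$ by introducing an auxiliary ``fresh'' index and using the Steinberg relation~R5 together with the commutator identities of the previous lemma. If $\Card\{\pm i,\pm j,\pm k,\pm h\}=8$ then $\dagger$ applies verbatim and there is nothing to prove. Otherwise, inspection of the hypotheses shows that the only admissible coincidences are $k=i$, $k=-j$, $h=j$, or $h=-i$, and each of these forces $\Card\{\pm i,\pm j,\pm k,\pm h\}\le 6$. Since $|\Omega|=2n\ge 8$, one can pick an index $l\in\Omega\setminus\{\pm i,\pm j,\pm k,\pm h\}$.

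Now I would fix arbitrary lifts $W\in\epsilon\inv X_{ij}(a)$, $U\in\epsilon\inv X_{kl}(b)$, $V\in\epsilon\inv X_{lh}(1)$. Relation~R5 gives $X_{kh}(b)=[X_{kl}(b),X_{lh}(1)]$ in $\st$, so $[U,V]$ is one particular preimage of $X_{kh}(b)$, and by the Steinberg central trick (Lemma~\ref{sct}) the element $[\epsilon\inv X_{ij}(a),\epsilon\inv X_{kh}(b)]$ may be computed on this preimage. Commutator identity~C3 gives
\[
[W,[U,V]]=[W,U]\cdot{}^U[W,V]\cdot{}^{UV}[W,U\inv]\cdot{}^{UVU\inv}[W,V\inv].
\]
A short check, case by case on the four admissible coincidence patterns, shows that with $l\notin\{\pm i,\pm j,\pm k,\pm h\}$ both pairs $(X_{ij}(a),X_{kl}(b))$ and $(X_{ij}(a),X_{lh}(1))$ satisfy the index hypotheses of relation~R3, so their images commute in $\st$. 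Consequently $[W,U]$ and $[W,V]$ lie in $\Ker\epsilon\subseteq\Cent(H)$; conjugation then acts trivially on them, and centrality forces $[W,U\inv]=[W,U]\inv$ and $[W,V\inv]=[W,V]\inv$. The four factors therefore telescope to $[W,U]\cdot[W,V]\cdot[W,U]\inv\cdot[W,V]\inv=1$, which is what we wanted.

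The main obstacle is purely bookkeeping: one must verify, in each of the four coincidence patterns, that the choice of $l$ indeed places both auxiliary commutators in the scope of R3. This is routine but slightly tedious; it could be compressed by exploiting the symmetries $X_{ij}(a)=X_{-j,-i}(\varepsilon_{-j}\overline{a}\varepsilon_i)$ from R0 and the swap $[\alpha,\beta]\inv=[\beta,\alpha]$ to reduce the four patterns to a single representative. The only place where the size constraint $n\ge 4$ enters is in ensuring the existence of $l$, and one sees that the argument breaks for $n=3$ exactly because a fresh index may fail to exist when $|\Omega|=6$.
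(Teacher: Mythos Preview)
Your proof is correct and follows essentially the same route as the paper's: factor $X_{kh}(b)$ through a fresh index $l$ via R5, observe that the lifted commutators $[W,U]$ and $[W,V]$ are central because R3 holds in $\st$, then expand $[W,[U,V]]$ by C3 and cancel. One small simplification: the case-by-case bookkeeping you flag as an obstacle is actually unnecessary, since the hypotheses $k\notin\{-i,j\}$ and $h\notin\{i,-j\}$ together with $l\notin\{\pm i,\pm j\}$ already place both auxiliary pairs directly in the scope of relation~R3, independently of which coincidence pattern occurs.
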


\begin{proof}
If $\Card\{\pm i,\pm j,\pm k,\pm h\}\neq8$ then using the fact that $n\geq4$ we can fix $l\in\Omega\setminus\{\pm i,\pm j,\pm k,\pm h\}$ and $x\in\epsilon\inv X_{ij}(a)$, $y\in\epsilon\inv X_{kl}(b)$, $z\in\epsilon\inv X_{lh}(1)$ (for $n=\infty$ we should work in the $\StU(2m,\,R,\,\fp)$ with $m$ large enough). Relation~R5 implies that $[y,\,z]\in\epsilon\inv X_{kh}(b)$ and Relation~R3 implies that $[x,\,y]$, $[x,\,z]\in\Ker(\epsilon)\subseteq\Cent\big(\Dom(\epsilon)\big)$. Thus using Identity~C2 we have
$$
1=[x,\,y\inv y]=[x,\,y\inv]\cdot[x,\,y],
$$
i.e. $[x,\,y\inv]=[x,\,y]\inv$ (so it is central) and the same about $[x,\,z\inv]$. Now using Lemma~\ref{sct} and C3 we obtain that
$$
[\epsilon\inv X_{ij}(a),\,\epsilon\inv X_{kh}(b)]=[x,\,[y,\,z]]=[x,\,y]\cdot[x,\,z]\cdot[x,\,y\inv]\cdot[x,\,z\inv]=1.
$$
If $\Card\{\pm i,\pm j,\pm k,\pm h\}=8$ we can just use the Property \dag.
\end{proof}

\begin{rm*}
It is easy to see that if $n\geq5$ or $n=\infty$ then Property \dag\ holds for every central extension of $\st$. Indeed, if $n\geq5$ then we can fix $l\not\in\{\pm i,\pm j,\pm k,\pm h\}$ in the proof above even if $\Card\{\pm i,\pm j,\pm k,\pm h\}=8$.
\end{rm*}

\begin{lm}
\label{r4}
Let $i\in\Omega$, $j\in\Omega\setminus\{-i\}$, $k\in\Omega\setminus\{i,\pm j\}$. Then for any $\lambda\in\fp$, $a\in R$
$$
[\epsilon\inv X_i(\lambda),\,\epsilon\inv X_{jk}(a)]=1.
$$
\end{lm}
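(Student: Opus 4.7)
The argument I envisage is a direct transcription of the proof of Lemma~\ref{r3}, with relation \textup{R4} playing the role that \textup{R3} played there: the idea is to rewrite $X_{jk}(a)$ as a commutator of two transvections whose indices avoid $\pm i$, so that after lifting, the commutator of any lift of $X_i(\lambda)$ with each factor is automatically central in the extension.

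The first step is to fix an auxiliary index $l\in\Omega\setminus\{\pm i,\pm j,\pm k\}$. Since the forbidden set has at most six elements, such an $l$ exists whenever $n\geq 4$; for $n=\infty$ one passes to $\StU(2m,\,R,\,\fp)$ for $m$ large enough, exactly as in Lemma~\ref{r3}. Relation \textup{R5} then gives $[X_{jl}(1),\,X_{lk}(a)]=X_{jk}(a)$ in $\st$ (all index conditions being satisfied). I would then pick arbitrary lifts $x\in\epsilon\inv X_i(\lambda)$, $y\in\epsilon\inv X_{jl}(1)$, $z\in\epsilon\inv X_{lk}(a)$, so that $[y,z]\in\epsilon\inv X_{jk}(a)$, and by Lemma~\ref{sct} the claim reduces to $[x,[y,z]]=1$ in $\Dom(\epsilon)$.

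Next, from $j\neq -i$, $l\neq i$, $l\neq -i$, and $k\neq i$, relation \textup{R4} yields $[X_i(\lambda),\,X_{jl}(1)]=1$ and $[X_i(\lambda),\,X_{lk}(a)]=1$ in $\st$. Hence $[x,y]$ and $[x,z]$ lie in $\Ker(\epsilon)\subseteq\Cent(\Dom(\epsilon))$; expanding $1=[x,\,yy\inv]$ and $1=[x,\,zz\inv]$ via \textup{C2} shows that $[x,y\inv]=[x,y]\inv$ and $[x,z\inv]=[x,z]\inv$ are central as well. Three applications of \textup{C2} to $[x,\,yzy\inv z\inv]$ then make every conjugation trivial by centrality, and the product telescopes to $[x,y]\cdot[x,z]\cdot[x,y]\inv\cdot[x,z]\inv=1$.

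The only non-routine step is producing the auxiliary index $l$, which is precisely why the hypothesis $n\geq 4$ (or $n=\infty$) is needed; once $l$ is in hand the rest is a mechanical unwinding of \textup{C2} identical in spirit to Lemma~\ref{r3}.
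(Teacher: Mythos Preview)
Your argument is correct and is precisely the intended one: the paper's proof of Lemma~\ref{r4} is just the hint ``Like previous lemma,'' and your write-up is exactly the transcription of the proof of Lemma~\ref{r3} with \textup{R4} substituted for \textup{R3}. The choice of auxiliary index $l\in\Omega\setminus\{\pm i,\pm j,\pm k\}$, the decomposition $X_{jk}(a)=[X_{jl}(1),X_{lk}(a)]$ via \textup{R5}, and the collapse of $[x,[y,z]]$ using centrality and \textup{C2}/\textup{C3} all match.
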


\begin{hint}
Like previous lemma.
\end{hint}

\begin{lm}
\label{r5}
Let $i$, $j$, $k$, $h$ be indices from $\Omega$ such that $\Card\{\pm i,\pm j,\pm k,\pm h\}=8$. Then for any $a$, $b\in R$
$$
[\epsilon\inv X_{ki}(a),\,\epsilon\inv X_{ih}(b)]=[\epsilon\inv X_{kj}(ab),\,\epsilon\inv X_{jh}(1)].
$$
\end{lm}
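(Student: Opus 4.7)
My plan is to choose convenient lifts in $\Dom(\epsilon)$ and reduce the claim to a single identity that will follow from Lemma~\ref{r3} together with the commutator identity~C1. Because $k, i, j, h$ are four distinct elements of $\Omega$ with no two summing to zero, I can pick lifts $\alpha\in\epsilon\inv X_{ki}(a)$, $u\in\epsilon\inv X_{ij}(b)$ and $v\in\epsilon\inv X_{jh}(1)$. Relation~R5 gives $[X_{ij}(b),\,X_{jh}(1)]=X_{ih}(b)$ and $[X_{ki}(a),\,X_{ij}(b)]=X_{kj}(ab)$ in $\st$, so $[u, v]$ is a lift of $X_{ih}(b)$ and $[\alpha, u]$ is a lift of $X_{kj}(ab)$. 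By the Steinberg central trick (Lemma~\ref{sct}) the left- and right-hand sides of the lemma then equal $[\alpha,\,[u, v]]$ and $[[\alpha, u],\,v]$ respectively, so it suffices to prove these two expressions coincide in $\Dom(\epsilon)$.

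To do this I would conjugate $[u, v]$ by $\alpha$. Since $X_{ki}(a)$ commutes with $X_{jh}(1)$ in $\st$ by R3, Lemma~\ref{r3} gives $[\alpha, v]=1$, hence $\alpha v\alpha\inv=v$; writing $\alpha u\alpha\inv=[\alpha, u]\cdot u$ and applying identity~C1 then produces $\alpha[u, v]\alpha\inv=\,^{[\alpha, u]}[u, v]\cdot[[\alpha, u],\,v]$. A second appeal to Lemma~\ref{r3} — applied to the lifts $[\alpha, u]$ of $X_{kj}(ab)$ and $[u, v]$ of $X_{ih}(b)$, which commute in $\st$ by R3 — kills $[[\alpha, u],\,[u, v]]$, so the conjugation is trivial and $\alpha[u, v]\alpha\inv=[u, v]\cdot[[\alpha, u],\,v]$. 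Rearranging yields $[\alpha,\,[u, v]]=[u, v]\cdot[[\alpha, u],\,v]\cdot[u, v]\inv$.

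To finish I need the remaining conjugation to be trivial, i.e.\ $[u, v]$ and $[[\alpha, u], v]$ must commute in $\Dom(\epsilon)$. These lift $X_{ih}(b)$ and $X_{kh}(ab)$, which commute in $\st$ by R3 despite sharing the index $h$ (the condition $h\neq-h$ in $\Omega$ is automatic), so Lemma~\ref{r3} applies one last time and the desired identity $[\alpha,\,[u, v]]=[[\alpha, u],\,v]$ follows. The proof is a direct computation rather than a deep argument; the main ``obstacle'' is really bookkeeping — verifying at each of the three invocations of Lemma~\ref{r3} that the index-disjointness hypotheses are met. The pivotal insight is the choice of $[u, v]$ as the lift of $X_{ih}(b)$: it ensures that every nuisance commutator arising in the manipulation falls under the hypotheses of the previously established Lemma~\ref{r3}.
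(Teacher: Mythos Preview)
Your proof is correct and follows the same overall strategy as the paper: lift $X_{ih}(b)$ and $X_{kj}(ab)$ as commutators of lifts of $X_{ki}(a)$, $X_{ij}(b)$, $X_{jh}(1)$, and then use Lemma~\ref{r3} to kill every nuisance commutator that appears. The only difference is the group-theoretic identity you lean on: the paper applies the Hall--Witt identity~C5 directly (with $y\in\epsilon\inv X_{ij}(-b)$ so that $[y\inv,z]$ and $[x\inv,y]$ become the desired lifts), obtaining $^y[x,[y\inv,z]]=\,^x[[x\inv,y],z]$ in one shot and then removing the outer conjugations via Lemma~\ref{r3}. You instead expand $\alpha[u,v]\alpha\inv$ by hand using~C1, which produces exactly the same cancellations but spread over three invocations of Lemma~\ref{r3} rather than two. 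This is a cosmetic difference; the content is the same.
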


\begin{proof}
Fix $x\in\epsilon\inv X_{ki}(a)$, $y\in\epsilon\inv X_{ij}(-b)$, $z\in\epsilon\inv X_{jh}(1)$. By Lemma~\ref{r3} $[z\inv,\,x]=1$ and thus identity~C5 implies that
$$
^y[x,\,[y\inv,\,z]]=\,^x[[x\inv,\,y],\,z].
$$
But using R5 we have that $[y\inv,\,z]\in\epsilon\inv X_{ih}(b)$, $[x\inv,\,y]\in\epsilon\inv X_{kj}(ab)$ and  $[x,\,[y\inv,\,z]]$, $[[x\inv,\,y],\,z]\in\epsilon\inv X_{kh}(ab)$ and thus commute with $x$ and $y$ by Lemma~\ref{r3}.
\end{proof}

\begin{df}
For $a\in R$, $k$, $h\in\Omega$ such that $k\not\in\{\pm h\}$ we will denote the commutator $[\epsilon\inv X_{ki}(a),\,\epsilon\inv X_{ih}(1)]$ by $S_{kh}(a)$, where $i\in\Omega\setminus\{\pm k,\pm h\}$. This definition does not depend on the choice of $i$ by Lemma~\ref{r5}.
\end{df}

\begin{rm*}
Lemma~\ref{r5} implies that $[\epsilon\inv X_{ki}(a),\,\epsilon\inv X_{ih}(b)]=S_{kh}(ab)$.
\end{rm*}

We want to find $S_{kh}(a)\in\epsilon\inv X_{kh}(a)$ such that Relations R0--R9 would hold for them. In particular, Relation~R5 should hold but central trick implies that this relation is equivalent to the identity in the remark above. So it was natural to define right hand side of this identity as it's left hand side.

\begin{lm}
\label{r1}
For any $i\in\Omega$, $j\in\Omega\setminus\{\pm i\}$, $a$, $b\in R$
$$
S_{ij}(a)S_{ij}(b)=S_{ij}(a+b).
$$
\end{lm}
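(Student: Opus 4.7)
The plan is to reduce the desired additivity in $\Dom(\epsilon)$ to the additivity of the $X_{ik}$'s that already holds in $\st$ by R1, by computing the same commutator in two different ways. The key technical input will be Lemma~\ref{r3}, which lets me pull the ``outer'' element of a conjugation across a lift of a commuting transvection.

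First I would pick an auxiliary index $k\in\Omega\setminus\{\pm i,\pm j\}$, which is available because $n\geq4$. Fix lifts $x\in\epsilon\inv X_{ik}(a)$, $y\in\epsilon\inv X_{ik}(b)$ and $z\in\epsilon\inv X_{kj}(1)$. Since R1 holds in $\st$, we have $\epsilon(xy)=X_{ik}(a+b)$, so $xy$ is a lift of $X_{ik}(a+b)$; hence by Lemma~\ref{sct} (the Steinberg central trick), applied to the commutator which defines $S_{ij}(a+b)$ independently of the chosen lifts,
$$
S_{ij}(a+b)=[xy,\,z].
$$
Expanding with identity C1,
$$
[xy,\,z]=\,^{x}[y,\,z]\cdot[x,\,z],
$$
and by the very definition of the symbols $S$, $[y,\,z]=S_{ij}(b)$ and $[x,\,z]=S_{ij}(a)$.

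The crucial step is now: by Lemma~\ref{r3} applied to the pair of index quadruples $(i,j,i,k)$, every lift of $X_{ij}(b)$ commutes with every lift of $X_{ik}(a)$. The hypotheses of that lemma read $j\in\Omega\setminus\{\pm i\}$, $i\in\Omega\setminus\{-i,j\}$ and $k\in\Omega\setminus\{i,-j,\pm i\}$, all of which are guaranteed by the choice $k\not\in\{\pm i,\pm j\}$. Hence $[y,\,z]$ commutes with $x$, so $^{x}[y,\,z]=[y,\,z]$ and
$$
S_{ij}(a+b)=S_{ij}(b)\cdot S_{ij}(a).
$$

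Finally, to obtain the factors in the correct order I would rerun the same computation with the roles of $x$ and $y$ swapped. The product $yx$ also lifts $X_{ik}(a+b)$, since R1 in $\st$ gives $X_{ik}(a)X_{ik}(b)=X_{ik}(b)X_{ik}(a)$, so by Lemma~\ref{sct} again $[yx,\,z]=S_{ij}(a+b)$. Expanding with C1 gives $[yx,\,z]=\,^{y}[x,\,z]\cdot[y,\,z]$; the same application of Lemma~\ref{r3} now lets me commute $y$ past the lift $[x,\,z]=S_{ij}(a)$ of $X_{ij}(a)$, yielding $S_{ij}(a+b)=S_{ij}(a)\cdot S_{ij}(b)$, as desired. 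There is no genuine obstacle here; the only point that must not be skipped is the verification that the index quadruple meets the hypotheses of Lemma~\ref{r3}, which is what forces the auxiliary index $k$ to avoid $\pm i$ and $\pm j$ and hence requires $n\geq4$.
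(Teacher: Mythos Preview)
Your proof is correct and follows essentially the same strategy as the paper: pick an auxiliary index, expand a commutator with one of the identities C1--C4, and use Lemma~\ref{r3} to kill the correction term. The only cosmetic difference is that the paper places the variable arguments on the second factor (taking $x\in\epsilon\inv X_{il}(1)$, $y\in\epsilon\inv X_{lj}(a)$, $z\in\epsilon\inv X_{lj}(b)$ and applying C4), which yields $S_{ij}(a)S_{ij}(b)=S_{ij}(a+b)$ directly in the right order; your choice of C1 with the variable arguments on the first factor produces $S_{ij}(b)S_{ij}(a)$ first, forcing the extra swap (which, incidentally, you could shortcut by simply interchanging $a$ and $b$ in the identity you already obtained rather than rerunning the computation).
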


\begin{proof}
Fix $l\in\Omega\setminus\{\pm i,\pm j\}$, $x\in\epsilon\inv X_{il}(1)$, $y\in\epsilon\inv X_{lj}(a)$, $z\in\epsilon\inv X_{lj}(b)$. By Lemma~\ref{r3} $[y,\,[z,\,x]]=[\epsilon\inv X_{lj}(a),\,\epsilon\inv X_{ij}(-b)]=1$ and thus C4 implies that
$$
[x,\,y][x,\,z]=[x,\,yz].
$$
\end{proof}

\begin{rm*}
As we mentioned earlier, Lemma~\ref{r1} implies that $S_{ij}(0)=1$ and $S_{ij}(a)\inv=S_{ij}(-a)$.
\end{rm*}

\begin{lm}
\label{r0}
For any $i\in\Omega$, $j\in\Omega\setminus\{\pm i\}$, $a\in R$
$$
S_{ij}(a)=S_{-j,-i}(\varepsilon_{-j}\overline a\varepsilon_i).
$$
\end{lm}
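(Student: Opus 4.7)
The plan is to evaluate both sides of the claimed identity as commutators of preimages via the definition of $S$, and then identify them using the symmetry relation R0 inside $\st$ together with the commutator swap afforded by the central trick.

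First, since $n\geq 4$, pick any $k\in\Omega\setminus\{\pm i,\pm j\}$ and write, directly from the definition (together with the remark after Lemma~\ref{r5}),
$$
S_{ij}(a)=[\epsilon\inv X_{ik}(a),\,\epsilon\inv X_{kj}(1)].
$$
Now, relation~R0 is an equality \emph{in $\st$ itself}, so the preimages under $\epsilon$ of $X_{ik}(a)$ and of $X_{-k,-i}(\varepsilon_{-k}\overline a\varepsilon_i)$ are literally the same subset of $\Dom(\epsilon)$, and similarly for $X_{kj}(1)=X_{-j,-k}(\varepsilon_{-j}\overline 1\varepsilon_k)$. Consequently
$$
S_{ij}(a)=[\epsilon\inv X_{-k,-i}(\varepsilon_{-k}\overline a\varepsilon_i),\,\epsilon\inv X_{-j,-k}(\varepsilon_{-j}\overline 1\varepsilon_k)].
$$

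Next I would swap the two entries of this commutator using $[x,y]\inv=[y,x]$ (applied to any chosen representatives; well-definedness follows from Lemma~\ref{sct}), and then invoke the remark after Lemma~\ref{r5} in the opposite direction, with $-k$ playing the role of the middle index (this is legal because $-k\notin\{\pm(-i),\pm(-j)\}$). That identifies the swapped commutator as $S_{-j,-i}\bigl(\varepsilon_{-j}\overline 1\,\varepsilon_k\varepsilon_{-k}\overline a\,\varepsilon_i\bigr)$. Taking the inverse back, and using Lemma~\ref{r1} (i.e. $S_{-j,-i}(c)\inv=S_{-j,-i}(-c)$), we obtain
$$
S_{ij}(a)=S_{-j,-i}\bigl(-\overline 1\,\varepsilon_k\varepsilon_{-k}\cdot\varepsilon_{-j}\overline a\varepsilon_i\cdot\text{(reordered)}\bigr),
$$
with the ring factors placed according to the two preceding steps.

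Finally I would reduce the coefficient $-\overline 1\,\varepsilon_k\varepsilon_{-k}$ to $1$ by a direct case check: if $k\in\Omega_+$ then $\varepsilon_k\varepsilon_{-k}=\overline 1\inv\cdot(-1)=-\overline 1\inv$, and if $k\in\Omega_-$ then $\varepsilon_k\varepsilon_{-k}=(-1)\cdot\overline 1\inv=-\overline 1\inv$; in either case $-\overline 1\cdot(-\overline 1\inv)=1$, using only invertibility of $\overline 1$ and centrality of $\pm 1$. The main technical care lies precisely in this bookkeeping of the $\varepsilon_l$'s and of the order of multiplication (since $\overline 1$ is not assumed central), but once the swap and Lemma~\ref{r1} have been combined, everything collapses and yields $S_{ij}(a)=S_{-j,-i}(\varepsilon_{-j}\overline a\varepsilon_i)$, as required.
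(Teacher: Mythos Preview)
Your argument is correct and follows exactly the route of the paper's proof: write $S_{ij}(a)$ as a commutator via an auxiliary index, rewrite each factor using R0, swap the commutator entries, recognize the result as $S_{-j,-i}(\cdot)^{-1}$ via Lemma~\ref{r5}, take inverses using Lemma~\ref{r1}, and simplify with $\varepsilon_k\varepsilon_{-k}=-\overline{1}^{\,-1}$. One small remark: your displayed line with ``(reordered)'' is needlessly coy---the honest product coming out of the swap is $\varepsilon_{-j}\,\overline{1}\,\varepsilon_k\varepsilon_{-k}\,\overline a\,\varepsilon_i$, and since $\overline{1}\,\varepsilon_k\varepsilon_{-k}=-1$ collapses \emph{in place} to a central element, no reordering or commutativity of $\overline{1}$ with anything else is required.
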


\begin{proof}
Fix $l\in\Omega\setminus\{\pm i,\pm j\}$. Obviously $\varepsilon_l\,\varepsilon_{-l}=-{\overline1\,}\inv$ so
\begin{align*}
S_{ij}(a)=[\epsilon\inv X_{il}(a),\,\epsilon\inv X_{lj}(1)]=[\epsilon\inv X_{-l,-i}(\varepsilon_{-l}\overline a\varepsilon_i),\,\epsilon\inv X_{-j,-l}(\varepsilon_{-j}\overline1\varepsilon_l)]=&\\
={[\epsilon\inv X_{-j,-l}(\varepsilon_{-j}\overline1\varepsilon_l)),\,\epsilon\inv X_{-l,-i}(\varepsilon_{-l}\overline a\varepsilon_i)]}\inv={S_{-j,-i}(\varepsilon_{-j}\overline1\varepsilon_l\,\varepsilon_{-l}\overline a\varepsilon_i)}\inv=&\\
=S_{-j,-i}(-\varepsilon_{-j}\overline1\varepsilon_l\,\varepsilon_{-l}\overline a\varepsilon_i)=S_{-j,-i}(\varepsilon_{-j}\overline a\varepsilon_i).
\end{align*}
\end{proof}

\begin{lm}
\label{r6}
For any $i\in\Omega$, $j\in\Omega\setminus\{\pm i\}$, $(u,\,a)$, $(v,\,b)\in\mathfrak L$
$$
[\epsilon\inv X_i(u,\,a),\,\epsilon\inv X_j(v,\,b)]=S_{i,-j}(\varepsilon_iB(u,\,v)).
$$
\end{lm}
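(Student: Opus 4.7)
The plan is to exploit relation~R8 downstairs to decompose $X_j(v,b)$ as a product of a generator and a commutator of two simpler generators, then lift this decomposition and iteratively collapse $[\epsilon\inv X_i(u,a),\epsilon\inv X_j(v,b)]$ by means of the commutator identities~C1--C6 together with Lemmas~\ref{r3}--\ref{r5}. Fix an auxiliary index $k\in\Omega\setminus\{\pm i,\pm j\}$, possible since $n\geq 4$. A preliminary observation is that $(v,-\overline b)\in\mathfrak L$: indeed $(0,b+\overline b)\in\mathfrak L_{\min}\subseteq\mathfrak L$ and $\dot-(v,b)\in\mathfrak L$, so their $\dotplus$-sum $(-v,\overline b+B(v,v))$ belongs to $\mathfrak L$, and its $\dotplus$-inverse is $(v,-\overline b)$. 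A direct substitution in~R8 ($i\mapsto-k$, $j\mapsto-j$, $a\mapsto-\overline b$, $b\mapsto1$) yields the downstairs identity
$$
X_j(v,b)=X_{-k,-j}(\varepsilon_{-k}\overline b)\cdot\big[X_{-k}(v,-\overline b),X_{k,-j}(1)\big].
$$

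Next I would pick lifts $x\in\epsilon\inv X_i(u,a)$, $p\in\epsilon\inv X_{-k}(v,-\overline b)$, $q\in\epsilon\inv X_{k,-j}(1)$, $r\in\epsilon\inv X_{-k,-j}(\varepsilon_{-k}\overline b)$, so that $y:=r\cdot[p,q]$ is a lift of $X_j(v,b)$. By~C2, $[x,y]=[x,r]\cdot{^r}[x,[p,q]]$; Lemma~\ref{r4} kills $[x,r]$, and since $[x,[p,q]]$ has image $X_{i,-j}(\varepsilon_i B(u,v))$ by~R6 while the indices $(-k,-j)$, $(i,-j)$ satisfy the commutation hypothesis of~R3, Lemma~\ref{r3} forces $[r,[x,[p,q]]]=1$, giving $[x,y]=[x,[p,q]]$. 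Expanding $[x,pqp\inv q\inv]$ by~C3 and using that $[x,q]\in\Ker\epsilon$ (by Lemma~\ref{r4}) is central, the two $q$-contributions cancel, leaving $[x,[p,q]]=\big[[x,p],pqp\inv\big]$.

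Setting $C:=[x,p]$, whose image is $X_{i,k}(\varepsilon_iB(u,v))$ by~R6, one more C3-expansion of $[C,pqp\inv]$ kills $[C,p]$ and $[C,p\inv]$ via Lemma~\ref{r4} and leaves ${^p}[C,q]$. The remark following the definition of $S_{\cdot,\cdot}$ identifies $[C,q]$ as exactly $S_{i,-j}(\varepsilon_iB(u,v))$, while Lemma~\ref{r4} shows that $p$ commutes with this element, so ${^p}[C,q]=[C,q]$. Combining everything yields $[\epsilon\inv X_i(u,a),\epsilon\inv X_j(v,b)]=S_{i,-j}(\varepsilon_iB(u,v))$. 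The main obstacle is arranging the initial R8-decomposition so that every commutator encountered along the cascade is either killed by Lemma~\ref{r3} or~\ref{r4} (because its indices are in general position), or is precisely the defining expression for an $S$-element; the auxiliary index $k$ must be chosen outside $\{\pm i,\pm j\}$ for all these compatibility conditions to hold simultaneously.
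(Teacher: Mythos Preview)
Your argument is correct and rests on the same idea as the paper's: pick an auxiliary index outside $\{\pm i,\pm j\}$ and use R8 to write $X_j(v,b)$ as (essentially) a commutator $[X_{-k}(v,-\overline b),\,X_{k,-j}(1)]$ times a short-root correction, then let Lemmas~\ref{r3}--\ref{r5} identify the resulting lifted commutator with $S_{i,-j}(\varepsilon_iB(u,v))$. The only real difference is bookkeeping. The paper takes the three lifts $x\in\epsilon\inv X_i(u,a)$, $y\in\epsilon\inv X_{-l}(-v,b)$, $z\in\epsilon\inv X_{l,-j}(1)$ and applies the Hall--Witt identity~C5 once: since $[z\inv,x]=1$, C5 collapses to ${}^x[[x\inv,y],\,z]={}^y[x,\,[y\inv,z]]$, whose left side is $S_{i,-j}(\varepsilon_iB(u,v))$ up to a harmless conjugation and whose right side is the desired $[\epsilon\inv X_i(u,a),\,\epsilon\inv X_j(v,b)]$ after one use of~C2. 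Your route instead constructs the explicit lift $y=r[p,q]$ and expands twice via~C3, which is longer but more mechanical. Both reach the same key step $[\epsilon\inv X_{i,k}(\varepsilon_iB(u,v)),\,\epsilon\inv X_{k,-j}(1)]=S_{i,-j}(\varepsilon_iB(u,v))$. One small remark: Lemma~\ref{r4} actually gives $[x,q]=1$ on the nose (not merely centrality), so your cancellation of the $q$-contributions is even more immediate than you state.
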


\begin{proof}
Fix $l\in\Omega\setminus\{\pm i,\pm j\}$, $x\in\epsilon\inv X_i(u,\,a)$, $y\in\epsilon\inv X_{-l}(-v,\,b)$, $z\in\epsilon\inv X_{l,-j}(1)$ (note that $(-v,\,b)=(v,\,b)\leftharpoonup(-1)\in\mathfrak L$). Using the fact that $[z\inv,\,x]=1$ (Lemma~\ref{r4}) and Identity~C5 we have 
$$
^x[[x\inv,\,y],\,z]=\,^y[x,\,[y\inv,\,z]].
$$
Now one can check that $[x\inv,\,y]\in\epsilon\inv X_{ik}(\varepsilon_iB(u,\,v))$ and $[y\inv,\,z]\in\epsilon\inv\big(X_j(v,\,b)\cdot X_{-l,-j}(-\varepsilon_{-l}\overline b)\big)$ (note that $(v,\,b)\in\fp\leq\fp_{\max}$ so $y\inv\in\epsilon\inv X_{-l}(v,\,-\overline b)$). C2 implies that $[x,\,[y\inv,\,z]]=[\epsilon\inv X_i(u,\,a),\,\epsilon\inv X_j(v,\,b)]\cdot1$. Now use that $S_{i,-j}(\varepsilon_iB(u,\,v))$ commutes with $x$ and $y$ (by Lemma~\ref{r4}).
\end{proof}

\begin{lm}
\label{r8}
For any $i$, $j$, $k\in\Omega$, such that $\Card\{\pm i,\pm j,\pm k\}=6$, $(u,\,a)\in\fp$, $b\in R$
\begin{multline*}
S_{i,-k}(\varepsilon_i\overline b\,{\overline1}\inv\overline ab)[\epsilon\inv X_i((u,\,-\overline a)\leftharpoonup b),\,\epsilon\inv X_{-i,-k}(1)]=\\=S_{j,-k}(\varepsilon_j\overline ab)[\epsilon\inv X_j(u,\,-\overline a),\,\epsilon\inv X_{-j,-k}(b)].
\end{multline*}
\end{lm}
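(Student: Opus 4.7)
The plan is to apply the Hall--Witt commutator identity C5 to a carefully chosen triple of lifts that bridges the $i$- and $j$-framings on the two sides of the claim. I would choose $x\in\epsilon\inv X_j(u,-\overline a)$, $y\in\epsilon\inv X_{-j,-i}(1)$, and $z\in\epsilon\inv X_{-i,-k}(-b)$. By Lemma~\ref{r4}, $[x,z]=1$ since $X_j(u,-\overline a)$ and $X_{-i,-k}(-b)$ commute by R4 (as $-i\neq-j$ and $-k\neq j$). Hence $[z\inv,x]=1$ and the middle term of C5 vanishes, leaving
$$^y[x,[y\inv,z]]={}^x[[x\inv,y],z].$$

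The left-hand side of this identity produces the commutator from the RHS of the lemma: by Lemma~\ref{r5}, $[y\inv,z]\in\epsilon\inv X_{-j,-k}(b)$, so by the Steinberg central trick $[x,[y\inv,z]]=[\epsilon\inv X_j(u,-\overline a),\epsilon\inv X_{-j,-k}(b)]$. The conjugation ${}^y(\cdot)$ introduces a controlled shift computable via the R0-image $X_{-j,-i}(1)=X_{i,j}(\varepsilon_i\overline 1\varepsilon_{-j})$ and R5, which I expect to absorb into the $S_{j,-k}(\varepsilon_j\overline ab)$ correction factor after using the normalization $\varepsilon_{-j}\varepsilon_j=-\overline 1\inv$.

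The right-hand side of the identity produces the commutator from the LHS of the lemma. By R8 applied to $X_j(u,-\overline a)\inv=X_j(-u,\overline a+B(u,u))$ and $X_{-j,-i}(1)$, the image of $[x\inv,y]$ factors as $X_{j,-i}(\varepsilon_j(\overline a+B(u,u)))\cdot X_i(u,a)\inv$. Expanding $[[x\inv,y],z]$ via C1 splits it into a transvection piece, which gives an $S_{j,-k}$-type correction through the definition of $S$ (Lemma~\ref{r5}), and a Heisenberg piece $[X_i(u,a)\inv,z]$; applying R8 once more to the latter, together with Lemmas~\ref{r0} and~\ref{r1}, identifies it with the commutator $[\epsilon\inv X_i((u,-\overline a)\leftharpoonup b),\epsilon\inv X_{-i,-k}(1)]$ of the LHS, up to an $S_{i,-k}$-correction.

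The main obstacle will be the algebraic bookkeeping: the auxiliary $B(u,u)$-terms that arise when inverting $X_j(u,-\overline a)$ must cancel between the two sides of the Hall--Witt identity. This cancellation uses Lemma~\ref{r6} (the R6 relation for lifts, where $B(u,u)$ enters explicitly via the formula for $\dot-$), together with the pseudo-involution identity $\overline{\overline b\,\overline 1\inv\overline ab}=\overline b\,\overline 1\inv ab$, to verify that the $S_{i,-k}$ and $S_{j,-k}$ coefficients match as stated.
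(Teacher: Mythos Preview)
Your outline has a structural gap in the placement of $b$. With $y\in\epsilon\inv X_{-j,-i}(1)$ and $z\in\epsilon\inv X_{-i,-k}(-b)$, the expansion of $[x\inv,y]$ via R8 (with second argument $1$) yields an element of $\epsilon\inv\big(X_{j,-i}(\varepsilon_j a)\cdot X_i(-u,-\overline a)\big)$; the Heisenberg factor carries no $\leftharpoonup b$. Consequently the ``Heisenberg piece'' of $[[x\inv,y],z]$ is $[\epsilon\inv X_i(-u,-\overline a),\,\epsilon\inv X_{-i,-k}(-b)]$, not the commutator $[\epsilon\inv X_i((u,-\overline a)\leftharpoonup b),\,\epsilon\inv X_{-i,-k}(1)]$ that actually appears on the LHS of the lemma. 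Your claim that ``applying R8 once more \ldots\ identifies it with'' the latter is the gap: R8 only tells you the two \emph{images} in $\StU$ agree up to an $X_{i,-k}$–factor; in the extension they may differ by an uncontrolled central element. At this point in the paper there is no tool that compares $[\epsilon\inv X_i(\xi),\,\epsilon\inv X_{-i,-k}(c)]$ for different pairs $(\xi,c)$ with the same image; establishing exactly such a comparison is the \emph{content} of Lemma~\ref{r8}, so invoking it here is circular.

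The paper avoids this by swapping where $b$ sits: it takes $y\in\epsilon\inv X_{-j,-i}(-b)$ and $z\in\epsilon\inv X_{-i,-k}(1)$. Then R8 applied to $[x\inv,y]$ already produces the action by $b$, namely $[x\inv,y]\in\epsilon\inv\big(X_{j,-i}(-\varepsilon_j ab)\cdot X_i((u,-\overline a)\leftharpoonup b)\big)$. The extraneous short-root factor is removed by introducing a fourth lift $w\in\epsilon\inv X_{j,-i}(\varepsilon_j ab)$, so that $[x\inv,y]\,w\in\epsilon\inv X_i((u,-\overline a)\leftharpoonup b)$ and hence $[[x\inv,y]\,w,\,z]$ is literally the LHS commutator. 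One then expands via C1 and C5 (using $[z\inv,x]=1$ from Lemma~\ref{r4}) and evaluates the resulting pieces with Lemmas~\ref{r3}--\ref{r6}. If you redo your computation with $b$ moved into $y$ and with this auxiliary $w$, your outline goes through.
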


\begin{proof}
Fix elements $x\in\epsilon\inv X_j(\dot-(-u,\,a))$, $y\in\epsilon\inv X_{-j,-i}(-b)$, $z\in\epsilon\inv X_{-i,-k}(1)$, $w\in\epsilon\inv X_{j,-i}(\varepsilon_jab)$. Using C1 we have
$
[[x\inv,\,y]w,\,z]=\,^{[x\inv,\,y]}[w,\,z]\cdot[[x\inv,\,y],\,z]
$
and using C5 and the fact $[z\inv,\,x]=1$ we have
$
^x[[x\inv,\,y],\,z]=\,^y[x,\,[y\inv,\,z]],
$
so
\begin{multline*}
[[x\inv,\,y]w,\,z]=\\=\,^{[x\inv,\,y]}[w,\,z]\cdot\,^{x\inv}[y,\,[x,\,[y\inv,\,z]]]\cdot[x\inv,\,[x,\,[y\inv,\,z]]]\cdot[x,\,[y\inv,\,z]].
\end{multline*}
One can check that \\
$
[x,\,[y\inv,\,z]]=[\epsilon\inv X_j(u,\,-\overline a),\,\epsilon\inv X_{-j,-k}(b)]\in\epsilon\inv\big(X_{j,-k}(-\varepsilon_j\overline ab)\cdot X_k((u,\,a)\leftharpoonup b)\big),
$ $[[x\inv,\,y]w,\,z]=[\epsilon\inv X_i((u,\,-\overline a)\leftharpoonup b),\,\epsilon\inv X_{-i,-k}(1)]$,\\ $[x\inv,\,[x,\,[y\inv,\,z]]]=S_{j,-k}(\varepsilon_jB(-u,\,ub))$\\ (use Lemmae~\ref{r4} and~\ref{r6}), $[y,\,[x,\,[y\inv,\,z]]]=S_{i,-k}(-\varepsilon_i\overline b\,{\overline1}\inv\overline ab)$ (use Lemmae~\ref{r4} and~\ref{r5}) and $[w,\,z]=S_{j,-k}(\varepsilon_jab)$. Use Lemmae~\ref{r3}, \ref{r4} and~\ref{r1} to finish the proof.
\end{proof}

\begin{df}
For $k\in\Omega$, $(u,\,a)\in\fp$ we will denote by $S_k(u,\,a)$ the element $S_{i,-k}(\varepsilon_i\overline a)\cdot[\epsilon\inv X_i(u,\,-\overline a),\,\epsilon\inv X_{-i,-k}(1)]$. This definition does not depend on the choice of $i$ by Lemma~\ref{r8}.
\end{df}

\begin{rm*}
Observe that by definition $S_k((u,a)\leftharpoonup b)$ is exactly 
$$
S_{i,-k}(\varepsilon_i\overline b\,{\overline1}\inv\overline ab)\cdot[\epsilon\inv X_i((u,\,-\overline a)\leftharpoonup b),\,\epsilon\inv X_{-i,-k}(1)].
$$ 
Thus, Lemma~\ref{r8} implies (changing $a$ by $-\overline a$ and $k$ by $-k$) that
$$
S_{jk}(\varepsilon_jab)S_{-k}((u,\,-\overline a)\leftharpoonup b)=[\epsilon\inv X_j(u,\,a),\,\epsilon\inv X_{-j,k}(b)].
$$
\end{rm*}

Again, we wanted to find $S_i(u,\,a)$ such that R0--R9 would hold for them, in particular, Relation~R8, i.e. precisely the identity above. So we defined left hand side of that identity as it's right hand side.

\begin{lm}
\label{r9}
For any $i\in\Omega$, $j\in\Omega\setminus\{\pm i\}$, $a\in R$
$$
[\epsilon\inv X_{ij}(a),\,\epsilon\inv X_{j,-i}(b)]=S_i(0,\,-\varepsilon_{-i}\overline1ab+\overline b\,{\overline1}\inv\overline a\varepsilon_i).
$$
\end{lm}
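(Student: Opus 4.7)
The plan is to apply the Hall--Witt commutator identity (C5) to a triple of preimages decomposing $X_{j,-i}(b)$ by R5, reduce the general statement to its special case with second argument~$1$, and then evaluate the special case by following the template of Lemma~\ref{r8}'s proof.

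First, I would pick $k\in\Omega\setminus\{\pm i,\pm j\}$ (possible since $n\geq 4$) and preimages $x\in\epsilon\inv X_{ij}(a)$, $y\in\epsilon\inv X_{jk}(b)$, $z\in\epsilon\inv X_{k,-i}(1)$. By Lemma~\ref{r5}, $[y\inv,z]\in\epsilon\inv X_{j,-i}(-b)$ and $[x\inv,y]\in\epsilon\inv X_{ik}(-ab)$; by Lemma~\ref{r3}, $[z\inv,x]=1$, killing the middle term of C5. Both surviving outer commutators $[x,[y\inv,z]]$ and $[z,[x\inv,y]]$ project via $\epsilon$ to elements of $X_i(0,\cdot)$-type in $\st$ (by R9 in the base group), so by Lemma~\ref{r4} the conjugations in C5 act trivially. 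Thus C5 collapses to
\[
[\epsilon\inv X_{ij}(a),\epsilon\inv X_{j,-i}(-b)]=[\epsilon\inv X_{ik}(-ab),\epsilon\inv X_{k,-i}(1)],
\]
reducing the proof to the special case where the second argument equals $1$.

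For the special case, I would follow the template of Lemma~\ref{r8}'s proof: fix preimages $x',y',z',w'$ in which at least one is taken from $\epsilon\inv X_{i'}(u,a')$ (a Heisenberg-type generator) so that the lifted R8 (the Remark after the definition of $S_k(u,a)$) applies; combine C1 and C5; and use Lemmas~\ref{r4}--\ref{r6} to simplify the resulting nested commutators. Matching the residual central element with the definition of $S_i(0,\xi)$ identifies the commutator as $S_i(0,-\varepsilon_{-i}\overline 1 ab+\overline b\,\overline 1\inv\overline a\varepsilon_i)$, and this identification is unambiguous because $S_i(0,\cdot)$ does not depend on the auxiliary index (Lemma~\ref{r8}).

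The main obstacle is the Heisenberg-arithmetic bookkeeping: the target $\xi=-\varepsilon_{-i}\overline 1 ab+\overline b\,\overline 1\inv\overline a\varepsilon_i$ carries $\overline 1^{\pm 1}$-twists coming from the involution rule $\overline{ab}=\overline b\,\overline 1\inv\overline a$ together with $\varepsilon_{\pm i}$-signs; the R8-corrections contribute analogous twists via $(u,-\overline{a'})\leftharpoonup b'=(ub',-\overline{b'}\,\overline 1\inv\overline{a'}b')$; and each R0-symmetrization contributes a further $\overline 1$-factor. Verifying that all these factors conspire to produce precisely the prescribed two-term expression is the delicate but purely algorithmic step, entirely analogous to the computation carried out in the proof of Lemma~\ref{r8}.
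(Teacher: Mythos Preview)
Your step~1 reduction is correct: with $x\in\epsilon^{-1}X_{ij}(a)$, $y\in\epsilon^{-1}X_{jk}(b)$, $z\in\epsilon^{-1}X_{k,-i}(1)$, Lemma~\ref{r3} does give $[z^{-1},x]=1$, and C5 collapses as you describe; the substitution $(a,b)\mapsto(-ab,1)$ is consistent with the target because $\overline{ab}=\overline b\,\overline1^{-1}\overline a$. The paper does not make this reduction, but it is valid.

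Step~2 is where the gap lies. Saying ``follow the template of Lemma~\ref{r8}'' and choosing one primary preimage from $\epsilon^{-1}X_{i'}(u,a')$ is not the right shape for this lemma. In the paper's proof \emph{none} of the three preimages is a Heisenberg-type generator: one takes $x\in\epsilon^{-1}X_{j,-t}(b)$, $y\in\epsilon^{-1}X_{-j,-t}(-\varepsilon_{-j}\overline a\varepsilon_i)$, $z\in\epsilon^{-1}X_{-t,-i}(1)$, so that $[y^{-1},z]\in\epsilon^{-1}X_{ij}(a)$ and $[x,z]\in\epsilon^{-1}X_{j,-i}(b)$ simultaneously, while the Heisenberg element $X_t(0,\cdot)$ appears only as the \emph{inner} commutator $[x^{-1},y]$ via R9 at the auxiliary index $t$. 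The term $[[x^{-1},y],z]$ is then literally of the form $[\epsilon^{-1}X_t(0,\xi),\epsilon^{-1}X_{-t,-i}(1)]$, which is the defining expression for $S_i(0,\cdot)$ up to an $S_{t,-i}$-correction. Moreover, with this choice $[z^{-1},x]$ is \emph{not} trivial, so the full Hall--Witt identity is needed and one must unpack the middle term via C1 and C6, producing two further $S_{t,-i}$-factors that combine with the first by Lemma~\ref{r1}. Your sketch neither identifies this choice of $x,y,z$ nor anticipates the non-vanishing middle term, and the $w$-correction mechanism from \ref{r8} that you invoke plays no role here.
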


\begin{proof}
Fix $t\in\Omega\setminus\{\pm i,\pm j\}$, $x\in\epsilon\inv X_{j,-t}(b)$, $y\in\epsilon\inv X_{-j,-t}(-\varepsilon_{-j}\overline a\varepsilon_i)$, and $z\in\epsilon\inv X_{-t,-i}(1)$. Using C1 we have
$
^y[y\inv\cdot\,^zy,\,[x,\,z]]=\,^{yy\inv}[\,^zy,\,[x,\,z]]$ $\cdot\,^y[y\inv,\,[x,\,z]].
$
Thus C5 and C6 imply that
$$
^x[[x\inv,\,y],\,z]=\,^y[x,\,[y\inv,\,z]]\cdot\big(\,^y[[y\inv,\,z],\,[x,\,z]]\cdot\,^y[[x,\,z],\,y\inv]\big).
$$
One can obtain that $[x\inv,\,y]\in\epsilon\inv X_t(0,\,-\overline{(-\varepsilon_{-i}\overline1ab+\overline b\,{\overline1}\inv\overline a\varepsilon_i)})$ using R0 and R9. Relations~R5 and R0 imply that $[y\inv,\,z]\in\epsilon\inv X_{ij}(a)$ and $[x,\,z]\in\epsilon\inv X_{j,-i}(b)$. Thus we obtain that $[x,\,[y\inv,\,z]]=S_{t,-i}(-\varepsilon_{-t}\overline b\,{\overline1}\inv\overline a\varepsilon_i)$ and $[[x,\,z],\,y\inv]=S_{t,-i}(\varepsilon_{-t}\varepsilon_{-i}\overline1ab)$. Now use Lemmae~\ref{r3},~\ref{r4} and \ref{r1} to finish the proof.
\end{proof}

\begin{lm}
\label{r2}
For any $i\in\Omega$, $(u,\,a)$, $(v,\,b)\in\fp$
$$
S_{i}(u,\,a)S_{i}(v,\,b)=S_i((u,\,a)\dotplus(v,\,b)).
$$
\end{lm}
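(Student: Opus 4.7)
The plan is to compute a single commutator $[x_1 x_2, y]$ in two different ways for well-chosen lifts and compare. Fix an auxiliary index $j \in \Omega \setminus \{\pm i\}$; for arbitrary $(u, a), (v, b) \in \fp$ pick $x_1 \in \epsilon\inv X_j(u, a)$, $x_2 \in \epsilon\inv X_j(v, b)$, $y \in \epsilon\inv X_{-j, -i}(1)$.

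Direct computation: $x_1 x_2$ lies above $X_j((u, a) \dotplus (v, b))$, so the identity recorded in the remark following the definition of $S_k(u, a)$ (i.e.\ Lemma~\ref{r8} specialized to $k = -i$, $b = 1$) yields
\begin{equation*}
[x_1 x_2, y] = S_{j,-i}(\varepsilon_j(a + b + B(u, v))) \cdot S_i(u + v, -\overline{a + b + B(u, v)}),
\end{equation*}
and the anti-Hermitian axiom recasts the second slot as $-\overline a - \overline b + B(v, u)$.

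Indirect computation: by C1, $[x_1 x_2, y] = {}^{x_1}[x_2, y] \cdot [x_1, y]$, each $[x_s, y]$ expanded by the same remark. To carry $x_1$ through the two factors of $[x_2, y]$, I invoke the central trick together with (i) Lemma~\ref{r4}, which gives $[x_1, S_{j,-i}(\varepsilon_j b)] = 1$, and (ii) Lemma~\ref{r6}, which gives $[x_1, S_i(v, -\overline b)] = S_{j,-i}(\varepsilon_j B(u, v))$. After merging $S_{j,-i}$-terms via Lemma~\ref{r1}, and using Lemma~\ref{r4} once more to commute $S_i(v, -\overline b)$ past $S_{j,-i}(\varepsilon_j a)$, the outcome is
\begin{equation*}
[x_1 x_2, y] = S_{j,-i}(\varepsilon_j(a + b + B(u, v))) \cdot S_i(v, -\overline b) \cdot S_i(u, -\overline a).
\end{equation*}

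Equating the two expressions and cancelling the common $S_{j,-i}$-factor gives
\begin{equation*}
S_i(v, -\overline b) \cdot S_i(u, -\overline a) = S_i(u + v, -\overline a - \overline b + B(v, u)).
\end{equation*}
To translate this into the claim, I will use that the map $\iota\colon (w, c) \mapsto (w, -\overline c)$ is an involution of $\fp$: since $\fp \leq \fp_{\max}$ forces $c - \overline c = B(w, w)$, we have $(w, -\overline c) = \dot-(w, c) \leftharpoonup (-1) \in \fp$, and plainly $\iota^2 = \mathrm{id}$. Writing any given $\xi, \zeta \in \fp$ as $\xi = \iota(v, b)$, $\zeta = \iota(u, a)$, the displayed identity becomes exactly $S_i(\xi) \cdot S_i(\zeta) = S_i(\xi \dotplus \zeta)$.

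The main obstacle will be accurate bookkeeping of the $S_{j,-i}$-contributions emerging from conjugation, especially the non-trivial one from Lemma~\ref{r6}; once that piece is identified, everything else collapses via the additivity of $S_{j,-i}$ (Lemma~\ref{r1}) and its commutation with the relevant $S_i$-terms (Lemma~\ref{r4}).
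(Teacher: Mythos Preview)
Your proof is correct and follows essentially the same route as the paper's. Both arguments expand a commutator of the form $[\text{product of two }X_j\text{-lifts},\ \epsilon\inv X_{-j,-i}(1)]$ in two ways, invoke Lemma~\ref{r8} (via the remark) for each piece, pick up the cross term $S_{j,-i}(\varepsilon_j B(u,v))$ from Lemma~\ref{r6}, and collapse the $S_{j,-i}$-factors with Lemmae~\ref{r1} and~\ref{r4}. The only cosmetic difference is that the paper chooses lifts $z\in\epsilon\inv X_t(u,-\overline a)$, $y\in\epsilon\inv X_t(v,-\overline b)$ and applies C4, so the identity $S_i(u,a)S_i(v,b)=S_i((u,a)\dotplus(v,b))$ drops out without a final substitution; you choose lifts $x_1\in\epsilon\inv X_j(u,a)$, $x_2\in\epsilon\inv X_j(v,b)$ and apply C1, which forces the involution $\iota$ at the end. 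Your justification that $\iota$ preserves $\fp$ is fine: indeed $\dot-(w,c)=(-w,-\overline c)$ for $(w,c)\in\fp_{\max}$, and $(-w,-\overline c)\leftharpoonup(-1)=(w,-\overline c)$ since $\overline{(-1)}\,\overline1^{-1}=-1$.
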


\begin{proof}
Fix $t\in\Omega\setminus\{\pm i\}$ and $x\in\epsilon\inv X_{-t,-i}(1)$, $y\in\epsilon\inv X_t(v,\,-\overline b)$, $z\in\epsilon\inv X_t(u,\,-\overline a)$. Identity C4 implies that
$$
[z,\,x][y,\,x]=[[z,\,x],\,y][yz,\,x].
$$
By C2 one has $[[z,\,x],\,y]=1\cdot S_{t,-i}(\varepsilon_t\overline{B(u,\,v)})$ (use R8 and Lemmae~\ref{r6} and \ref{r0}). Now~\ref{r8} and \ref{r1} finish the proof.
\end{proof}

\begin{lm}
\label{r7}
For any $i\in\Omega$, $(u,\,a)$, $(v,\,b)\in\fp$
$$
[S_i(u,\,a),\,S_i(v,\,b)]=S_i(0,\,B(u,\,v)-B(v,\,u)).
$$
\end{lm}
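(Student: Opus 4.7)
The plan is to derive this identity as a formal consequence of Lemma~\ref{r2} together with the group law of the Heisenberg group, exactly paralleling the remark after the statement of R7 that R7 is a direct consequence of R2.

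First, I would check that $S_i(0,0)=1$: applying Lemma~\ref{r2} with $(u,a)=(v,b)=(0,0)$ gives $S_i(0,0)\cdot S_i(0,0)=S_i((0,0)\dotplus(0,0))=S_i(0,0)$, so $S_i(0,0)$ is idempotent in a group and hence trivial. Applying Lemma~\ref{r2} once more to $(u,a)$ and $\dot-(u,a)$ then yields $S_i(u,a)\cdot S_i(\dot-(u,a))=S_i(0,0)=1$, so inverses in this family are computed by negation in the Heisenberg group: $S_i(u,a)\inv=S_i(\dot-(u,a))$.

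Next, iterating Lemma~\ref{r2} four times, I would write
\[
[S_i(u,a),\,S_i(v,b)]=S_i(u,a)\,S_i(v,b)\,S_i(\dot-(u,a))\,S_i(\dot-(v,b))=S_i\bigl((u,a)\dotplus(v,b)\dotplus\dot-(u,a)\dotplus\dot-(v,b)\bigr).
\]
It therefore suffices to compute this fourfold sum in $\mathfrak H$. Using $(u,a)\dotplus(v,b)=(u+v,a+b+B(u,v))$ and $\dot-(u,a)=(-u,-a+B(u,u))$, a short calculation (adding the vector parts, which telescope to $0$, and tracking the Heisenberg cocycle contributions $B(u,v)$, $B(u+v,-u)=-B(u,u)-B(v,u)$, $B(v,-v)=-B(v,v)$, together with the correction terms $B(u,u)$ and $B(v,v)$ from the inverses) collapses to $(0,\,B(u,v)-B(v,u))$.

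Substituting this back gives $[S_i(u,a),\,S_i(v,b)]=S_i(0,\,B(u,v)-B(v,u))$, which is the claim. Since every step is either Lemma~\ref{r2} or arithmetic in $\mathfrak H$, there is no real obstacle here; the only thing that requires care is getting the signs right in the Heisenberg computation, which is what makes the antisymmetric expression $B(u,v)-B(v,u)$ appear on the right-hand side.
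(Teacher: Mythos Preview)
Your proposal is correct and follows exactly the approach the paper intends: the paper's own proof is just the hint ``Use Lemma~\ref{r2}'', and you have spelled out precisely that derivation, reducing the commutator to the Heisenberg-group computation $(u,a)\dotplus(v,b)\dotplus\dot-(u,a)\dotplus\dot-(v,b)=(0,\,B(u,v)-B(v,u))$. The arithmetic checks out, so there is nothing to add.
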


\begin{hint}
Use Lemma~\ref{r2}.
\end{hint}

\begin{proof}[Proof of the Main lemma]
Lemmae~7--16 imply that Relations R3, R4, R5, R1, R0, R6, R8, R9, R2 and R7 respectively hold for $S_{ij}(a)$ and $S_i(u,\,a)$. Thus, there is a group homomorphism $\sigma$ from $\st$ on the group spanned on these elements, such that $\sigma(X_{ij}(a))=S_{ij}(a)$ and $\sigma(X_i(u,\,a))=S_i(u,\,a)$ (for $n=\infty$ we should use here the universal property of the direct limit). One can see that $\epsilon\sigma=1_{\st}$, i.e $\epsilon$ splits.
\end{proof}

\section{Schur multiplier of unitary Steinberg group}

In this section we will obtain the main results of our paper.

\begin{tm}
Let $\st$ be an odd unitary Steinberg group, where $n\geq5$ or $n=\infty$. Then $\M(\st)=1$. 
\end{tm}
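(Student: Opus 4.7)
The entire burden of the argument has already been shifted onto the Main Lemma, so my plan is to deduce the theorem from it in a few short steps. Concretely, I will take an arbitrary universal central extension $\pi\colon U\twoheadrightarrow\st$ (which exists by Lemma~\ref{stisperfect} and Lemma~\ref{uceexists}), show that $\pi$ splits, and then use that the domain of a universal central extension is perfect (remark after Lemma~\ref{unique}) together with Lemma~\ref{perfectsplit} to conclude that $\pi$ is an isomorphism, hence $\M(\st)=\Ker(\pi)=1$.

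The only substantive point is the verification that the Main Lemma applies to an arbitrary central extension of $\st$ when $n\geq 5$ or $n=\infty$. This is exactly the content of the remark following Lemma~\ref{r3}: when $n\geq 5$ (resp.\ $n=\infty$) and $\{\pm i,\pm j,\pm k,\pm h\}$ has cardinality $8$, one can still pick an index $l\in\Omega\setminus\{\pm i,\pm j,\pm k,\pm h\}$, write $X_{kh}(b)$ as the commutator of $X_{kl}(b)$ and $X_{lh}(1)$ via R5, and then use the Hall--Witt-style identity~C3 together with Lemma~\ref{sct} (or the same argument given inside the proof of Lemma~\ref{r3}) to obtain $[\epsilon\inv X_{ij}(a),\,\epsilon\inv X_{kh}(b)]=1$. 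So property $(\dag)$ is automatic for every central extension $\epsilon$ of $\st$ in the range of $n$ we are considering.

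Having $(\dag)$ for free, the Main Lemma tells us that every central extension of $\st$ splits; in particular the universal one $\pi\colon U\twoheadrightarrow\st$ splits. Since $U$ is perfect, Lemma~\ref{perfectsplit} yields $U\cong\st$, i.e.\ $\pi$ is an isomorphism and $\M(\st)=1$.

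There is no real obstacle here: all the genuine combinatorial work — constructing $S_{ij}(a)$ and $S_i(u,a)$ in $\epsilon\inv$ of the generators and verifying that R0--R9 hold for them — has been done inside the Main Lemma. The one thing worth being explicit about is why $(\dag)$ is free in our range, since for $n=4$ the same scheme breaks down (one cannot find an auxiliary index $l$ when $\{\pm i,\pm j,\pm k,\pm h\}$ already exhausts $\Omega$), which is consistent with the known failure of central closedness for $n=4$ in the orthogonal case cited in the introduction.
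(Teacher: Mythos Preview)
Your proof is correct and follows essentially the same route as the paper's own argument: take the universal central extension (existence via perfectness), invoke the remark after Lemma~\ref{r3} that $(\dag)$ holds automatically for $n\geq5$ or $n=\infty$, apply the Main Lemma to split it, and then use perfectness of the domain together with Lemma~\ref{perfectsplit} to conclude that the extension is an isomorphism. The additional explanation you give of why $(\dag)$ is free in this range and why $n=4$ is excluded is accurate and matches the paper's remarks.
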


\begin{proof}
$\st$ is perfect group by Lemma~\ref{stisperfect} so there is a universal central extension $\pi:U\twoheadrightarrow\st$. As we mentioned in the previous section when $n\geq5$ property \dag\ from the Main lemma holds for every central extension of $\st$, in particular for $\pi$. Thus, $\pi$ is split extension. But its domain $U$ is perfect (see remark after the definition of the universal central extension) so that by Lemma~\ref{perfectsplit}, extension $\pi$ is in fact an isomorphism.
\end{proof}

\begin{tm}
Let $\pi:U\twoheadrightarrow\sta8$ be a universal central extension. Then Schur multiplier $\M(\sta8)$ coincides with the subgroup of $U$ generated by the elements $\{[\pi\inv X_{ij}(a),\,\pi\inv X_{kh}(b)]\mid i,j,k,h\in\Omega,\ \Card\{\pm i,\pm j,\pm k,\pm h\}=8,\ a,b\in R\}$.
\end{tm}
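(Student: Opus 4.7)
Write $N$ for the subgroup of $U$ generated by the listed commutators; the goal is to show $N=\Ker(\pi)=\M(\sta8)$.

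First I would check $N\subseteq\Ker(\pi)$. For $a,b\in R$ and indices with $\Card\{\pm i,\pm j,\pm k,\pm h\}=8$, the condition $k\notin\{j,-i\}$ and $h\notin\{i,-j\}$ holds, so Relation R3 gives $[X_{ij}(a),X_{kh}(b)]=1$ in $\sta8$. Hence every generator of $N$ is in $\Ker(\pi)$, and because $\Ker(\pi)\subseteq\Cent(U)$ the subgroup $N$ is central, hence normal in $U$.

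Next I would form the quotient and induce $\bar\pi\colon U/N\epi\sta8$. Since $\Ker(\bar\pi)=\Ker(\pi)/N$ is the image of a central subgroup, $\bar\pi$ is still a central extension. The key observation is that $\bar\pi$ satisfies property $(\dag)$ of the Main lemma. Indeed, by Lemma~\ref{sct} the commutator $[\bar\pi\inv X_{ij}(a),\bar\pi\inv X_{kh}(b)]$ is computed by choosing any representatives: take $u\in\pi\inv X_{ij}(a)$, $v\in\pi\inv X_{kh}(b)$ and project to $U/N$; then $[u,v]\in N$ by the definition of $N$, so its class in $U/N$ is trivial.

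Since we are in the case $n=4$, which is covered by the hypothesis ``$n\geq4$ or $n=\infty$'' of the Main lemma, I can apply the Main lemma to $\bar\pi$ and conclude that $\bar\pi$ splits. Now, as the domain of a universal central extension of the perfect group $\sta8$ (Lemma~\ref{stisperfect}), the group $U$ is itself perfect (see the remark after the definition of universal central extension), and therefore so is its quotient $U/N$. Applying Lemma~\ref{perfectsplit} to the split central extension $\bar\pi$ with perfect domain yields that $\bar\pi$ is an isomorphism, i.e.\ $\Ker(\pi)/N=1$, which gives $N=\Ker(\pi)=\M(\sta8)$.

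The only non-routine point is the verification of property $(\dag)$ for $\bar\pi$, and the mild subtlety there is already dealt with by the Steinberg central trick; the rest of the argument is a direct packaging of the Main lemma together with Lemmae~\ref{perfectsplit} and~\ref{stisperfect}.
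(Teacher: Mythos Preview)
Your proof is correct and follows essentially the same approach as the paper: define the subgroup $N$ generated by the listed commutators, observe it is central (hence normal) and contained in $\Ker\pi$, pass to the quotient $U/N$, verify property~\dag\ for the induced central extension, and then apply the Main lemma together with Lemma~\ref{perfectsplit} using perfectness of $U/N$. You have simply filled in a few more details (the appeal to R3 for $N\subseteq\Ker\pi$, the explicit verification of~\dag, and why $U/N$ is perfect) than the paper's terse version.
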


\begin{proof}
Denote by $M$ the subgroup generated by $\{[\pi\inv X_{ij}(a),\,\pi\inv X_{kh}(b)]\mid i,j,k,h\in\Omega,\ \Card\{\pm i,\pm j,\pm k,\pm h\}=8,\ a,b\in R\}$. It is contained in the $\Ker\pi\subseteq\Cent U$ so it is normal. One has $\pi(M)=1$, and thus $\pi$ induces the natural morphism $\varpi:U/M\twoheadrightarrow\sta8$. Obviously, $\varpi$ is a central extension, its domain is perfect and property \dag\ holds for $\varpi$. Thus, by Main lemma and Lemma~\ref{perfectsplit} $\sta8\cong U/M$.
\end{proof}

\begin{tm}
Let $n\geq5$ or $n=\infty$. Suppose that $$\KU(2n,\,R,\,\fp)\subseteq\Cent(\st)$$ $($it holds for example when $\KU(2n-2,\,R,\,\fp)\rightarrow\KU(2n,\,R,\,\fp)$ is surjective or $n=\infty$, see Lemma~$\ref{cent})$. Then lemma~$\ref{ccisuce}$ implies that
$$
\KU(2n,\,R,\,\fp)=\M(\EU(2n,\,R,\,\fp)).
$$
\end{tm}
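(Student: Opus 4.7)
The plan is to apply Lemma~\ref{ccisuce} directly to the natural epimorphism $\pi:\st\epi\EU(2n,\,R,\,\fp)$ whose kernel is $\KU(2n,\,R,\,\fp)$ by definition. To do this I need to verify exactly two hypotheses of that lemma for $\pi$: that it is a central extension, and that its source $\st$ is centrally closed.

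Both inputs are already in hand. Centrality of $\Ker\pi=\KU(2n,\,R,\,\fp)$ inside $\st$ is precisely the standing assumption of the theorem; the parenthetical remark in the statement indicates that Lemma~\ref{cent} supplies this centrality automatically as soon as the stabilization map $\KU(2n-2,\,R,\,\fp)\to\KU(2n,\,R,\,\fp)$ is surjective, and in particular in the stable case $n=\infty$, since there $\phi_n$ carries $\KU(2n,\,R,\,\fp)$ into the center of $\StU(2n+2,\,R,\,\fp)$. Central closedness of $\st$ for $n\geq 5$ or $n=\infty$ is exactly the content of the first theorem of this section, namely $\M(\st)=1$; by the definition of ``centrally closed'' this is equivalent to $1_{\st}$ being a universal central extension.

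Granted these two observations, Lemma~\ref{ccisuce} concludes that $\pi$ itself is a universal central extension of $\EU(2n,\,R,\,\fp)$ (the epimorphicity of $\pi$ and perfectness of $\EU(2n,\,R,\,\fp)$, needed for the Schur multiplier to be defined, are inherited from the perfectness of $\st$ established in Lemma~\ref{stisperfect}). By the very definition of the Schur multiplier as the kernel of any universal central extension, this forces
$$
\M(\EU(2n,\,R,\,\fp))=\Ker\pi=\KU(2n,\,R,\,\fp),
$$
which is the claim.

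In other words, there is no real obstacle here: the statement is a one-step corollary that packages the first theorem of this section together with Lemma~\ref{cent} through the general formalism of universal central extensions built up in Section~1. All the genuine technical work lies upstream, in the Main lemma of Section~3 and its deployment to prove $\M(\st)=1$; the present theorem merely translates that triviality, via Lemma~\ref{ccisuce}, into a computation of the Schur multiplier of the elementary unitary group.
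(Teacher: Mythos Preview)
Your proposal is correct and matches the paper's intent exactly: the paper does not even write out a separate proof, since the statement itself already says ``Then Lemma~\ref{ccisuce} implies that \ldots''. You have simply unpacked that one-line deduction---apply Lemma~\ref{ccisuce} to the natural epimorphism $\st\epi\EU(2n,\,R,\,\fp)$, using the assumed centrality of $\KU(2n,\,R,\,\fp)$ together with Theorem~1 to supply the hypotheses---which is precisely the intended argument.
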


\end{document}